\documentclass{amsart}
\usepackage[english]{babel}
\usepackage{amsthm,amstext,amsmath,amscd,amssymb,latexsym, mathrsfs}
\usepackage{color}
\usepackage{graphicx}
\usepackage[T1]{fontenc}
\usepackage[variablett]{lmodern}
\usepackage{txfonts}

\usepackage[matrix,arrow]{xy}

\usepackage{amsfonts,enumerate,array,hyperref}

\usepackage{algorithm}
\usepackage{booktabs}
\usepackage{tikz}
\usepackage{caption}
\usepackage{subcaption}
\usetikzlibrary{backgrounds,patterns,matrix,calc,arrows}





\newcommand{\C}{{\mathbb C}}

\newcommand{\N}{{\mathbb N}}
\renewcommand{\P}{{\mathbb P}}

\newcommand{\Ss}{\mathbb{S}}
\newcommand{\T}{{\mathcal T}}

\newcommand{\Z}{{\mathbb Z}}



\newcommand{\kc}{{\mathcal C}}

\newcommand{\ko}{{\mathcal O}}

\newcommand{\kv}{{\mathcal V}}

\newcommand{\s}{\mathscr}

\newcommand{\sI}{{\s I}}




\newcommand{\punkt}{\HHspace{-.3ex}\raise.15ex\HHbox to1ex{\HHuge.}}

\DeclareMathOperator{\rank}{rank}




\newtheorem{theorem}{Theorem}[section]
\newtheorem{lemma}[theorem]{Lemma}
\newtheorem{proposition}[theorem]{Proposition}
\newtheorem{corollary}[theorem]{Corollary}

\theoremstyle{definition}

\theoremstyle{remark}
\newtheorem{remark}[theorem]{Remark}

\numberwithin{equation}{section}



\linespread{0.99}

\begin{document}

\title[Most tangential varieties to Veronese varieties are nondefective]
{
Most secant varieties of tangential varieties to Veronese varieties are nondefective
}

\author{Hirotachi Abo}
\address{Department of Mathematics, University of Idaho, Moscow, ID 83844-1103, USA}
\email{abo@uidaho.edu}
\author{Nick Vannieuwenhoven}
\address{Department of Computer Science, KU Leuven, Celestijnenlaan 200A, B-3001 Heverlee, Belgium.}
\email{nick.vannieuwenhoven@cs.kuleuven.be}
\thanks{The first author was partly supported by NSF grant DMS-0901816. The second author's research was partially supported by a Ph.D.~Fellowship of the Research Foundation--Flanders (FWO) and partially by a Postdoctoral Fellowship of the Research Foundation--Flanders (FWO)}
\subjclass[2000]{14M99, 14Q15, 14Q20, 15A69, 15A72}
\keywords{Chow--Veronese variety, Chow--Waring rank, tangential variety, nondefectivity of secant varieties}
\begin{abstract}
We prove a conjecture stated by Catalisano, Geramita, and Gimigliano in 2002, which claims that the secant varieties of tangential varieties to the $d$th Veronese embedding of the projective $n$-space $\P^n$ have the expected dimension, modulo a few well-known exceptions. It is arguably the first complete result on the dimensions of secant varieties of a classic variety since the work of Alexander and Hirschowitz in 1995. As Bernardi, Catalisano, Gimigliano, and Id\'a demonstrated that the proof of this conjecture may be reduced to the case of cubics, i.e., $d=3$, the main contribution of this work is the resolution of this base case. The proposed proof proceeds by induction on the dimension $n$ of the projective space  via a specialization argument. This reduces the proof to a large number of initial cases for the induction, which were settled using a computer-assisted proof. The individual base cases were computationally challenging problems. Indeed, the largest base case required us to deal with the tangential variety to the third Veronese embedding of $\P^{79}$ in $\P^{88559}$.
\end{abstract}
\maketitle

\section{Introduction}
\label{sec:intro}
Consider the ring of complex polynomials in $(n+1)$ variables, i.e., 
\[R = \C[x_0, x_1, \ldots, x_n] = \bigoplus_{d \geq 0} S_d(\C^{n+1}), \]
where $S_d(\C^{n+1})$ denotes the $d$th symmetric power of $\C^{n+1}$. The homogeneous polynomials of $R$ of degree $d$ will be referred to as $d$-\emph{forms}. Every $d$-form admits a so-called \emph{Waring decomposition} into a sum of powers of linear forms:
\begin{align} \label{eqn_waring_decomposition}
 f = \sum_{i=1}^s (\lambda_{i,0} x_0 + \lambda_{i,1} x_1 + \cdots + \lambda_{i,n} x_n)^d = \sum_{i=1}^s \ell_i^d;
\end{align}
if the length of the decomposition, i.e., $s$, is minimal, then it is called the \emph{Waring rank} (or just \emph{rank}) of $f$. A basic question to ask is: ``What is the rank of the generic $d$-form $f$?'' This question can be rephrased geometrically. 

Let $\P(S_d(\C^{n+1}))$ be the projective space of $S_d(\C^{n+1})$, so that every element of the former space can be written as $[f] = \{\alpha \, f \, | \, \alpha \in \C\setminus\{0\}\}$ for some nonzero $f \in S_d(\C^{n+1})$. Note that a $d$-form  and its nonzero scalar multiple have the same rank. Therefore, the rank of an element $[f] \in \P(S_d(\C^{n+1}))$ can be defined unambiguously as the rank of the $d$-form $f$. 
The set of rank-one elements of $\P(S_d(\C^{n+1}))$ forms a nonsingular, nondegenerate $n$-dimensional subvariety called the \emph{$d$th Veronese variety}, which we denote by $\kv_{n,d}$. An element of $\P(S_d(\C^{n+1}))$ has rank $s$ if and only if it lies in a ``secant $(s-1)$-plane to $\kv_{n,d}$,'' i.e., an $(s-1)$-dimensional linear subspace spanned by~$s$ points of $\kv_{n,s}$. Since the $s$th secant variety $\sigma_s(\kv_{n,d})$ of $\kv_{n,d}$ is defined as the Zariski closure of the union of secant $(s-1)$-planes to $\kv_{n,d}$, it follows that the Zariski closure of the set of rank-$s$ elements of $\P(S_d(\C^{n+1}))$ is precisely $\sigma_s(\kv_{n,d})$.
Hence, the $d$-forms that admit an expression as in (\ref{eqn_waring_decomposition}) of length $s$ constitute a dense constructible subset of the affine cone over $\sigma_s(\kv_{n,d})$.
By definition, we have an ascending chain of varieties 
\[
\kv_{n,d} = \sigma_1(\kv_{n,d}) \subset \sigma_2(\kv_{n,d}) \subset \cdots \subset \sigma_s(\kv_{n,d}) \subset \cdots. 
\]
Since every $d$-form admits a Waring decomposition of finite length, this ascending chain becomes stationary, i.e., we have $\sigma_s(\kv_{n,d}) = \P (S_d(\C^{n+1}))$ for a sufficiently large $s \in \N$. This implies that the problem of determining the rank of the generic $d$-form is equivalent to the problem of finding the least positive integer $s$ such that  $\sigma_s(\kv_{n,d}) = \P (S_d(\C^{n+1}))$.

According to Brambilla and Ottaviani \cite{BO2008}, the origins of \emph{Waring's problem}, i.e., the problem of finding the Waring rank of the generic $d$-form, can be traced back some 150 years to Sylvester, Campbell, Palatini, and Terracini, but it was not until the end of the $20$th century that Alexander and Hirschowitz completely solved the problem in a series of papers culminating in their well-known 1995 paper \cite{AH1995}.

Variations of Waring's problem can be obtained by modifying the concept of the rank of $d$-form. A natural alternative definition of the rank of $d$-form is the \emph{Chow rank}, which was recently considered in \cite{abo2014, AB2011,Shin, Torrance2015}. 
Every $d$-form $f$ in $(n+1)$ variables is expressible as a sum of a finite number of \emph{completely decomposable $d$-forms}, 
i.e., $d$-forms that can be written as products of $d$ linear forms: 
\begin{align}\label{eqn_completely_decomposable}
 f = \sum_{i=1}^s \prod_{j=1}^d (\lambda_{i,j,0} x_0 + \lambda_{i,j,1} x_1 + \cdots + \lambda_{i,j,n} x_n) = \sum_{i=1}^s \ell_{i,1} \ell_{i,2} \cdots \ell_{i,d}.
\end{align}   
The minimum number of completely decomposable $d$-forms that sum up to a $d$-form is then called the \emph{Chow rank} of the $d$-form. We propose to call (\ref{eqn_completely_decomposable}) a \emph{Chow decomposition}. As an analogy to Waring's problem, one may seek the least positive integer $s$ such that a general $d$-form has Chow rank $s$. Geometrically, this \emph{Chow's problem} is equivalent to finding the least positive integer $s$ such that the $s$th secant variety of the \emph{Chow variety} $\kc_{n,d}$ parameterizing the zero cycles of degree $d$ in $\P(\C^{n+1})$ coincides with its ambient space $\P(S_d(\C^{n+1}))$; see, e.g., \cite{GKZ1994} for the definition of the Chow variety.
%
The value of such an $s$ is believed to be $\left\lceil \binom{n+d}{d}/(nd+1) \right\rceil$ for most values of $n$ and $d$. Recently, it was proved that this is actually the case for ternary forms \cite{abo2014}. This is the first nontrivial case, as the conjecture for the binary case is trivially true because of the fundamental theorem of algebra. Nevertheless, in the general case, the conjecture is still wide open, and new ideas are necessary to prove (or disprove) this conjecture. 

In this paper, we attempt to further our understanding of the aforementioned conjecture by investigating an intermediate case between Waring's problem, which was resolved, and Chow's problem, which is wide open.
%
This idea was inspired by the family of problems consisting of decomposing a partially symmetric tensor into a sum of rank-$1$ partially symmetric tensors in $S_{d_1}(\C^{n+1}) \otimes S_{d_2}(\C^{n+1}) \otimes \cdots \otimes S_{d_k}(\C^{n+1})$ for some partition $(d_1, d_2, \ldots, d_k)$ of $d$; see \cite{CGG2004} and \cite[Sections 3.6, 5.5.3, and 5.7]{Landsberg2012} for a geometric interpretation of such decompositions, and see \cite{Abo2010,AB2009,AB2013,LP2013} for recent progress on these problems. This family of problems includes as special cases the original Waring problem, where the partition is $(d)$, as well as the general tensor decomposition problem, where the partition is $(1,1,\ldots,1)$.

We envisage a whole spectrum of decompositions intermediate between the two extreme cases (\ref{eqn_waring_decomposition}) and (\ref{eqn_completely_decomposable}).
For a fixed partition $\boldsymbol d  = (d_1, \dots, d_k)$ of a positive integer $d$, consider a $d$-form that splits completely into linear forms as follows: $\ell_1^{d_1} \ell_2^{d_2} \cdots \ell_k^{d_k}$. Every $d$-form $f \in S_d(\C^{n+1})$ can be expressed as a finite sum of such $d$-forms: 
\begin{equation}
\label{eq:chow-waring}
f = \sum_{i=1}^s \prod_{j=1}^k (\lambda_{i,j,0} x_0 +\cdots + \lambda_{i,j,n} x_n)^{d_j} = \sum_{i=1}^s \prod_{j=1}^k \ell_{i,j}^{d_j}. 
\end{equation}
We call (\ref{eq:chow-waring}) a \emph{$\boldsymbol d$th Chow--Waring decomposition} of $f$. If the decomposition~(\ref{eq:chow-waring}) has the shortest length, then $s$ is called the \emph{$\boldsymbol d$th Chow--Waring rank of $f$}. It goes without saying that (\ref{eq:chow-waring}) is a Waring decomposition of $f$ if $\boldsymbol d = (d)$, while it is a Chow decomposition of $f$ if $\boldsymbol d = (1, \dots, 1)$. Hence, the family of problems of finding the \emph{generic $\boldsymbol d$th Chow--Waring rank} of $\P(S_d(\C^{n+1}))$, i.e., the least positive integer $s$ such that the generic $f \in S_d(\C^{n+1})$ has $\boldsymbol d$th Chow--Waring rank $s$, contains both Waring's and Chow's problems as special cases. 

A geometric interpretation of the above-mentioned problem is as follows. 
Consider the map of $(\P^n)^{k}$ into $\P(S_d(\C^{n+1}))$ defined by $([\ell_1], [\ell_2], \ldots, [\ell_k]) \mapsto [\ell_1^{d_1} \ell_2^{d_2} \cdots \ell_k^{d_k}]$. We call this map the \emph{$\boldsymbol d$th Chow--Veronese map of $\P^n$} and its image, denoted by $CV_{n,\boldsymbol d}$, the \emph{$\boldsymbol d$th Chow--Veronese variety}. The $d$th Veronese embedding $\kv_{n,d}$ of $\P(\C^{n+1})$ and the Chow variety $\kc_{n,d}$ 
in $\P(\C^{n+1})$ are both special types of Chow--Veronese varieties; namely, $CV_{n,(d)} = \kv_{n,d}$ and $CV_{n,(1,\dots, 1)} = \kc_{n,d}$. As in the case of Veronese varieties and Chow varieties, finding the least positive integer $s$ such that the $s$th secant variety of $CV_{n, \boldsymbol d}$ fills $\P(S_d(\C^{n+1}))$ and finding the generic $\boldsymbol d$th Chow--Waring rank of $\P(S_d(\C^{n+1}))$ are equivalent.

The only $\boldsymbol d$th Chow--Waring problem that has been completely settled so far is the case where $\boldsymbol d = (d)$, i.e., the original Waring's problem. This paper shall be concerned with a modest first step towards the resolution of the $\boldsymbol d$th Chow--Waring problem. It will be our goal to determine---in all cases---the $(d-1,1)$th generic Chow--Waring rank of $\P(S_d(\C^{n+1}))$. 

The $(d-1,1)$th Chow--Veronese variety $CV_{n,(d-1,1)}$ is more commonly known as the \emph{tangent variety} $\T_{n,d}$ of $\kv_{n,d}$. Determining the generic $(d-1,1)$th Chow--Waring rank of $\P(S_d(\C^{n+1}))$ is therefore equivalent to finding the least positive integer $s$ such that $\sigma_s(\T_{n,d}) = \P(S_d(\C^{n+1}))$; this is a problem that has already received some attention in the literature \cite{ballico,BCGI,CGG2002}.
As points on $\T_{n,d}$ can be parameterized as $[\ell^{d-1} m]$ with $\ell, m \in \C^{n+1}$, it follows immediately from a standard parameter count that
\begin{eqnarray}
\label{eq:expectedDimension}
 \dim \sigma_s(\T_{n,d}) \leq  \min \left\{ (2n+1)s, \ N(n,d)\right\}-1, \mbox{ where } N(n,d)=\binom{n+d}{d}.
\end{eqnarray}
We call the quantity on the right-hand side of the above inequality the \emph{expected dimension} of $\sigma_s(\T_{n,d})$. The $s$th secant variety $\sigma_s(\T_{n,d})$ to $\T_{n,d}$ is said to be \emph{defective} if equality does not hold in (\ref{eq:expectedDimension}). Otherwise we say that $\sigma_s(\T_{n,d})$ is \emph{nondefective}.
Catalisano, Geramita, and Gimigliano \cite{CGG2002} conjectured in 2002 that $\sigma_s(\T_{n,d})$ is always nondefective, 
unless it is one of the exceptional cases in the statement of Theorem~\ref{thm_main}.
The main contribution of this paper is a proof of the Catalisano--Geramita--Gimigliano (CGG) conjecture. To be more precise, we will prove the following result.
\begin{theorem}\label{thm_main}
The $s$th secant variety $\sigma_s(\T_{n,d})$ of the tangential variety $\T_{n,d}$ to the Veronese variety $\kv_{n,d}$ is nondefective, except in the following cases:
\begin{itemize}
 \item[(i)] $d=2$ and $2 \le 2s < n$; and
 \item[(ii)] $d=3$ and $s = n = 2, 3,$ and $4$.
\end{itemize}
\end{theorem}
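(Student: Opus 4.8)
The plan is to prove Theorem~\ref{thm_main} by induction on the dimension $n$ of the projective space, using the Terracini-type specialization machinery that reduces a statement about $\dim\sigma_s(\T_{n,d})$ to a statement about the Hilbert function of a scheme of "fat points with tangent directions" (the so-called $(2,3)$-points or double-plus-tangent schemes) imposed on $\P^n$. As pointed out in the excerpt, the work of Bernardi, Catalisano, Gimigliano, and Id\'a \cite{BCGI} reduces the entire conjecture to the cubic case $d=3$, so the first step is to invoke that reduction and restrict attention to $\T_{n,3}$; after that the target is to show that, apart from $n=s=2,3,4$, the scheme of $s$ general such points imposes independent conditions (or, equivalently, that the residual/trace exact sequence argument terminates correctly). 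The heart of the argument is then a \emph{specialization argument} in the style of Alexander--Hirschowitz: one specializes a subset of the $s$ general points onto a hyperplane $H\cong\P^{n-1}$, applies Castelnuovo's exact sequence relating the ideal sheaf of the specialized scheme to its trace on $H$ and its residual in $\P^n$, and uses the inductive hypothesis on $H$ (dimension $n-1$) together with a direct count on the residual scheme (supported away from $H$, typically governed by a lower Veronese) to conclude that both the trace and residual impose independent conditions. Concatenating these two pieces via the exact sequence yields nondefectivity of $\sigma_s(\T_{n,3})$ for the relevant $(n,s)$.

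I would organize the inductive step as a careful bookkeeping of three regimes of $s$ relative to the "boundary" value where $(2n+1)s$ crosses $N(n,3)=\binom{n+3}{3}$: the subgeneric range, the (at most one or two) critical values near the boundary, and the superabundant range. In each regime one chooses how many of the $s$ points to specialize onto $H$ so that, after taking the trace on $H$ and the residual, both resulting interpolation problems land in ranges where the inductive hypothesis (for the trace, on $\P^{n-1}$) and an elementary degree/dimension count (for the residual) apply. Monotonicity/propagation lemmas—if $\sigma_s$ is nondefective and not yet filling, then suitable neighbors are too—are used to reduce to checking the critical values only. The outcome of this analysis is that the induction goes through \emph{provided} a finite but large list of base cases in low dimension is verified: concretely, all $(n,s)$ with $n$ below some explicit bound $n_0$, together with the critical values of $s$ for each such $n$.

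The main obstacle—and where the bulk of the paper's effort goes—will be \textbf{settling these base cases of the induction}. Because the specialization only reduces dimension by one at a time and the critical values of $s$ grow like $\binom{n+3}{3}/(2n+1)\sim n^2/12$, the base cases are genuine large-scale interpolation problems; as the abstract indicates, the worst one lives in $\P(S_3(\C^{80}))=\P^{88559}$. I would handle these by a \emph{computer-assisted proof}: for each base case, pick explicit (e.g. random rational or finite-field) points, assemble the relevant Terracini/Jacobian matrix of the parametrization of $\sigma_s(\T_{n,3})$ at that generic-enough point, and certify its rank equals the expected value. Semicontinuity of rank then upgrades the single numerical check to the generic statement, and working over a finite field $\F_p$ (with a lifting/valuation argument) makes the rank certification exact and the linear algebra feasible at these sizes. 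The delicate practical points are (a) engineering the computation—sparse structure, choice of prime, memory—so that $\P^{88559}$-scale matrices can be row-reduced, and (b) making sure the list of base cases actually produced by the inductive reduction is complete, so that no $(n,s)$ slips through; the exceptional cases in~(ii) will emerge precisely as the finitely many low-dimensional base cases where the rank certification returns a deficient value.
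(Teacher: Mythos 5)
Your proposal captures the high-level architecture of the paper's proof correctly: reduce to $d=3$ via \cite{BCGI}, translate $\dim\sigma_s(\T_{n,3})$ into a Hilbert-function question for a scheme of $(2,3)$-points via Terracini's lemma, set up an induction on $n$ via specialization to a linear subspace and a short exact sequence of ideal sheaves, and certify the resulting base cases with a rank computation over a finite field. These are all genuinely the ingredients of the paper.

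However, there is one crucial technical choice missing, and it is the choice around which the whole paper is built. You propose specializing onto a \emph{hyperplane} $H \cong \P^{n-1}$, reducing $n$ by one at a time. The paper instead specializes onto \emph{codimension-$24$} linear subspaces $L_1, L_2, L_3$, reducing $n$ by $24$ at a time, and does so in a four-stage cascade (the statements $T(n,s;a_1,a_2,a_3)$ with up to three of the $a_i$ nonzero). The number $24$ is not a convenience: it is the arithmetic choice that makes the intermediate specialized statements $T(n,0;96,96,96)$ and $T(n,96;t(n-24),t(n-24),0)$ \emph{equiabundant} (Lemma~\ref{lem:equiabundant}), so that the corresponding cohomology groups $H^0$ are expected to vanish, and each short exact sequence closes with no leftover. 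One-codimension Horace as you describe it does not enjoy this: the trace/residual split after specializing to a single hyperplane leaves nontrivial numerical remainders, and handling them would require something like the ``m\'ethode d'Horace diff\'erentielle'' of Alexander--Hirschowitz with much more delicate case bookkeeping. It is not at all clear that your hyperplane-by-hyperplane scheme would terminate in a finite, verifiable list of base cases without introducing that additional machinery. So while the skeleton of your argument is the right one, the specific specialization you chose would likely fail or require a substantially different (and harder) analysis; the codimension-$24$ specialization and the equiabundance Lemma~\ref{lem:equiabundant} are the key ideas you would need to discover.

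One smaller inaccuracy: you suggest that the defective cases in (ii), namely $(n,s)=(2,2),(3,3),(4,4)$, ``will emerge precisely as the finitely many low-dimensional base cases where the rank certification returns a deficient value.'' In the paper these small $n$ are already covered by the earlier results of Catalisano--Geramita--Gimigliano and by the CoCoA verification in \cite{BCGI} for $n\le 9$; the paper's own base cases (Corollary~\ref{cor_base_cases}, with $n$ ranging over $\{8,\ldots,31\}$, $\{32,\ldots,55\}$, $\{56,\ldots,79\}$, and $n=71$) are all verified to be nondefective. The defectivity of the exceptional cases is an input to the proof, not an output of the rank computation.
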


To the best of our knowledge, all the defective cases listed in Theorem~\ref{thm_main} were first found by Catalisano, Geramita, and Gimigliano in~\cite{CGG2002}. Please consult~\cite[Propositions 3.2, 3.3, and 3.4]{CGG2002} for the detailed descriptions of these defective cases.

The following is an immediate consequence of Theorem~\ref{thm_main}, which completely solves the $(d-1,1)$th Chow--Waring problem.
\begin{corollary}
The generic $(d-1,1)$th Chow--Waring rank of $\P(S_3(\C^{n+1}))$ is 
\[
 s = \left\lceil \frac{N(n,d)}{2n + 1} \right\rceil,
\]
unless $(d,n,s)$ is one of the defective cases listed in Theorem~\ref{thm_main}. In the defective cases with $d=3$, the generic rank equals $s+1$, while for $d=2$, the generic rank is $1+\lfloor n/2 \rfloor$.
\end{corollary}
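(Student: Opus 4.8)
The plan is to derive the corollary from Theorem~\ref{thm_main} via the equivalence recalled above, namely that the generic $(d-1,1)$th Chow--Waring rank of $\P(S_d(\C^{n+1}))$ is the least integer $s$ with $\sigma_s(\T_{n,d}) = \P(S_d(\C^{n+1}))$, the ambient space having dimension $N(n,d)-1$. Set $s_0 := \lceil N(n,d)/(2n+1)\rceil$. The lower bound $s \ge s_0$ for the generic rank needs only the elementary upper bound~(\ref{eq:expectedDimension}): whenever $s < N(n,d)/(2n+1)$ we get $\dim\sigma_s(\T_{n,d}) \le (2n+1)s - 1 < N(n,d) - 1$, so $\sigma_s(\T_{n,d})$ is a proper subvariety, and in particular $\sigma_{s_0-1}(\T_{n,d}) \ne \P(S_d(\C^{n+1}))$; no appeal to nondefectivity is made here.

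For the matching upper bound I would invoke Theorem~\ref{thm_main}. If $(d,n,s_0)$ is not one of the exceptional triples, then $\sigma_{s_0}(\T_{n,d})$ is nondefective, hence of dimension $\min\{(2n+1)s_0, N(n,d)\} - 1 = N(n,d) - 1$, using $(2n+1)s_0 \ge N(n,d)$. An irreducible projective variety whose dimension equals that of the ambient projective space must be that space, so $\sigma_{s_0}(\T_{n,d}) = \P(S_d(\C^{n+1}))$ and the generic rank is exactly $s_0 = \lceil N(n,d)/(2n+1)\rceil$. As Theorem~\ref{thm_main} lists exceptional triples only for $d=2$ and $d=3$, this settles all $d \ge 4$ at once, together with all $n$ for which $(3,n,s_0)$ is not exceptional.

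It then remains to treat the exceptional cases directly. For $d = 3$ and $n \in \{2,3,4\}$ one computes $N(n,3) = 10, 20, 35$ while $2n+1 = 5, 7, 9$, so $s_0 = n$ and $(3,n,s_0)$ is exactly the exceptional triple of Theorem~\ref{thm_main}(ii); its expected dimension is still $N(n,3) - 1$ (since $(2n+1)s_0 \ge N(n,3)$), but defectivity forces $\dim\sigma_{s_0}(\T_{n,3}) < N(n,3) - 1$, so $\sigma_{s_0}(\T_{n,3})$ does not fill, whereas $(3,n,s_0+1)$ has escaped the exceptional list, so $\sigma_{s_0+1}(\T_{n,3})$ is nondefective of dimension $\min\{(2n+1)(s_0+1), N(n,3)\} - 1 = N(n,3) - 1$ and hence equals $\P(S_3(\C^{n+1}))$; thus the generic rank is $s_0 + 1$, as asserted. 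For $d = 2$ I would argue directly: $\T_{n,2}$ is the Chow variety $\kc_{n,2}$, that is, the locus of quadratic forms of rank at most $2$ (over $\C$ a quadratic form is a product of two linear forms exactly when it has rank at most $2$), so $\sigma_s(\T_{n,2})$ is the locus of quadratic forms of rank at most $2s$, which fills $\P(S_2(\C^{n+1}))$ precisely when $2s \ge n+1$; hence the generic rank is $\lceil (n+1)/2\rceil = 1 + \lfloor n/2\rfloor$.

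I do not expect a genuine obstacle here: the corollary is essentially formal once Theorem~\ref{thm_main} is in hand, the only extra ingredients being that an irreducible variety as large as its ambient projective space fills it and, for $d = 2$, the classical description of secant varieties of symmetric determinantal varieties. The single point needing care is the numerology of the exceptional cases --- checking $s_0 = n$ for the three defective cubics and that passing from $s_0$ to $s_0+1$ leaves the exceptional list --- after which the whole argument reduces to comparing $\lceil N(n,d)/(2n+1)\rceil$ with the formulas in the statement.
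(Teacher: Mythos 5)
Your argument is correct, and since the paper declares the corollary an immediate consequence of Theorem~\ref{thm_main} without writing out a proof, what you have done is the natural fleshing-out. The skeleton is right: the dimension count~(\ref{eq:expectedDimension}) alone shows $\sigma_{s_0-1}(\T_{n,d})$ cannot fill $\P(S_d(\C^{n+1}))$, while for non-exceptional $(d,n,s_0)$ Theorem~\ref{thm_main} together with irreducibility of the secant variety forces $\sigma_{s_0}(\T_{n,d}) = \P(S_d(\C^{n+1}))$; the exceptional cubic triples $(3,n,n)$ with $n\in\{2,3,4\}$ are resolved by the step to $s_0+1=n+1$, which is no longer on the exceptional list, so nondefectivity at $s_0+1$ gives the stated $s+1$.

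Your direct treatment of $d=2$ --- identifying $\T_{n,2}$ with the quadrics of rank at most $2$, hence $\sigma_s(\T_{n,2})$ with the quadrics of rank at most $2s$ --- is in fact the safest route and yields the generic rank $\lceil (n+1)/2\rceil = 1+\lfloor n/2\rfloor$ unconditionally. This is worth keeping, because the condition printed in Theorem~\ref{thm_main}(i), namely $2\le 2s<n$, appears to carry a small misprint: a dimension count on symmetric determinantal varieties shows that $\sigma_s(\T_{n,2})$ (the rank $\le 2s$ quadrics) is defective precisely when $s\ge 2$ and $2s\le n$, so $s=1$ is never defective while $2s=n$ with $s\ge 2$ is, whereas the printed inequalities would classify both boundary cases the opposite way and would lead to the wrong generic rank for, e.g., $n=4$ and $n=6$. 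Your self-contained computation for $d=2$ sidesteps that issue entirely and produces exactly the value the corollary asserts.
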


We would be remiss if we did not mention that there has been significant previous work toward the completion of the CGG conjecture. In~\cite{CGG2002}, Catalisano, Geramita, and Gimigliano proved that the CGG conjecture is true for the following two cases; namely,  the first is the case where $d=2$ and  $n$, $s$ are arbitrary, and the second is the case where $s\leq 5$ and $n$, $d$ are arbitrary. In particular, the CGG conjecture was proved to be true for $d=3$, $n\in \{2,3,4\}$, and an arbitrary $s$. Another important result was obtained by Ballico. In~\cite{ballico}, he showed that the CGG conjecture holds for $n \in \{2,3\}$ and for arbitrary $d$ and $s$. 

A major breakthrough was made by Bernardi, Catalisano, Gimigliano, and Id\'a in 2009, who showed in \cite[Corollary 2.5]{BCGI} that if the CGG conjecture holds for $d=3$, then it also holds for $d \geq 4$. Additionally, they proved by an explicit computation in the commutative algebra software CoCoA that the CGG conjecture holds for $n \le 9$. Therefore, the novel contribution of this work concerns only the cases where $d=3$ and $n \geq 10$. We prove that there are no defective cases other than (i) and (ii) in Theorem \ref{thm_main}. 

The proof of Theorem~\ref{thm_main} proceeds via an inductive approach based on a specialization technique, which was inspired by the paper of Brambilla and Ottaviani~\cite{BO2008}. This inductive approach reduces the problem to a finite number of base cases. Employing a computer-assisted proof whose key step consists of computing the ranks of several very large integer matrices, we proved that these base cases are true. This turned out to be a computationally challenging problem, which we alleviated by exploiting the particular structure of aforementioned matrices. 

An application in which secant varieties of Chow--Veronese varieties naturally appear is the design of efficient algorithms for evaluating multivariate polynomials. From a practical viewpoint, knowledge of a $\boldsymbol d$th Chow--Waring decomposition of a specific polynomial $f$ results in tremendous savings in the (multiplicative) complexity for the evaluation of $f(x_0, x_1, \ldots, x_n)$. Indeed, if the polynomial is given naively through its coefficients, i.e., $f = \sum_{0 \le i_1 \le \cdots \le i_d \le n} c_{i_1,\ldots,i_d} x_{i_1} \cdots x_{i_d}$, then evaluating it requires $d$ multiplications for each of the $\binom{n+d}{d}$ terms. However, if we know that there is a $k$ such that $f$ admits $(d_1,\ldots,d_k)$th Chow--Waring rank $r$, then we also know that there exists an algorithm for evaluating the polynomial whose multiplicative complexity is only $k(n + 1) + d-1$ multiplications for each of the $r$ terms in the Chow--Waring decomposition, namely by first evaluating each of the linear forms, taking the appropriate products, and then summing. 

Aside from the foregoing very practical application, there is also theoretical interest in complexity theory in finding the minimal multiplicative complexity of simple computations involving only multiplication and summation. They are modeled as \emph{arithmetic circuits} which are essentially finite, labeled, directed, acyclic graphs; see, e.g., \cite{Landsberg2015,Burgisser1997,Shpilka2010}. Of particular interest are so-called depth-$3$ $\Sigma\Pi\Sigma$ circuits which are trees with $3$ levels representing precisely the polynomials that can be written as in (\ref{eqn_completely_decomposable}), where $s$ corresponds precisely with the number of inbound edges at the root of the tree \cite{Shpilka2010}. The overarching goal in arithmetic complexity theory consists of finding low-degree families of explicit polynomials that nevertheless admit superpolynomial growth of their arithmetic circuit size because of their ramifications to the separation of various algebraic complexity classes \cite{Shpilka2010,Burgisser2000}. Since several notions of the size of the circuit all depend intrinsically on the rank $s$ of the Chow decomposition, it is essentially the study of the secant varieties of Chow varieties. Similarly, the honest part of the $s$-secant variety of a $\boldsymbol d$th Chow--Veronese variety corresponds with a special subset of depth-$3$ $\Sigma\Pi\Sigma$ circuits that could be easier to study---as evidenced by the fact that we know the dimensions of these varieties already in two cases: $\boldsymbol d = (d)$ by the Alexander--Hirschowitz theorem, and $\boldsymbol d = (d-1,1)$ by Theorem \ref{thm_main}.

The outline of the remainder of this paper is as follows. In the next section, we recall some basic properties of tangential varieties of Veronese varieties. Section~\ref{sec:induction} describes the aforementioned inductive approach and illustrates how it will be used to prove Theorem~\ref{thm_main}. The main goal of this section is to state the inductive step (Proposition~\ref{prop:induction}) as well as list the base cases of the induction (Corollary~\ref{cor_base_cases}). Section~\ref{sec_inductive} will be devoted to the proof of Proposition~\ref{prop:induction}. In Section~\ref{sec_base}, we will verify the base cases to complete the proof of Theorem~\ref{thm_main}.

\paragraph{\bf Acknowledgements} We would like to thank G.~Ottaviani for providing feedback on an earlier version of this manuscript.

\section{Preliminaries}
\label{sec:prelim}
This section recalls some basic results on secant varieties of tangential varieties to Veronese varieties. First, we introduce some notation. Let $U$ be an $(n+1)$-dimensional vector space over $\C$. We denote by $\P(U)$, or simply $\P^n$, the projective space of lines in $U$ passing through the origin. Throughout this paper, we write $[v]$ for the equivalence class containing a nonzero vector $v$ of $U$.  For any closed subscheme $X$ of $\P^n$, we denote by 
\begin{itemize}
\item[--] $I_X$ the ideal of $X$; 
\item[--] $\sI_X$ the ideal sheaf of $X$; 
\item[--] $h_{\P(U)}(X,-)$ the Hilbert function of $X$; and 
\item[--] $\widehat{X} \subseteq U$ the affine cone over $X$. 
\end{itemize}
If $X \subset \P(U)$ is a variety, then $T_p(X)$ denotes the projective tangent space to $X$ at $p \in X$.

\subsection{Tangential varieties}
\label{sec:tangential-varieties}
Let $\widetilde{\nu}_{n,d} : U \rightarrow S_d(U)$ be the map defined by sending $\ell \in U$ to $\ell^d \in S_d(U)$. This map induces the so-called \emph{$d$th Veronese map} $\nu_{n,d}:\P(U) \rightarrow \P(S_d(U))$. The image of $\nu_{n,d}$ is the Veronese variety $\kv_{n,d}$.
Let $d\widetilde{\nu}_{n,d}: T_{\ell}(U) \rightarrow T_{\ell^d}\bigl(S_d(U)\bigr)$ be the differential of $\widetilde{\nu}_{n,d}$ at $\ell \in U \setminus \{0\}$, where $T_{\ell}(U)$ is the tangent space to $U$ at $\ell$ and $T_{\ell^d}\bigl(S_d(U)\bigr)$ is the tangent space to $S_d(U)$ at $\ell^d$. By taking the derivative of the parametric curve $(\ell+tm)^d$, $m \in U$, one can show that $\ell^{d-1}U$ is the image of $d\widetilde{\nu}_{n,d}$, i.e., the affine cone $\widehat{T}_{[\ell^d]} (\kv_{n,d})$ over the tangent space $T_{[\ell^d]} (\kv_{n,d})$ to $\kv_{n,d}$ at $[\ell^d]$. 

Let $\T_{n,d}$ denote the tangential variety of $\kv_{n,d}$, i.e., 
\[
 {\T}_{n,d} = \overline{ \bigcup_{[\ell^d] \in \kv_{n,d}} T_{[\ell^d]} (\kv_{n,d}) },
\]
where the overline denotes the Zariski closure in $\P(S_d(U))$. Let $\widehat{\T}_{n,d}$ be the affine cone over $\T_{n,d}$ in $S_d(U)$. Define a map $\varphi: U \times U \rightarrow S_d(U)$ by $\varphi(\ell, m) = \ell^{d-1} m$.  Again, taking the derivative of the parametric curve $(\ell+t\ell')^{d-1}(m+tm')$ with $\ell', m' \in U$ proves that the image of the differential $d \varphi$ of $\varphi$ at a generic point $(\ell, m) \in U \times U$ is given by 
\begin{align}
\label{eq:tangentSpace}
\widehat{T}_{[\ell^{d-1}m]} (\T_{n,d}) = \ell^{d-1}U+\ell^{d-2}mU.
\end{align}
Hence, it follows that $\T_{n,d}$ is irreducible of dimension $2n$ and that its singular locus is $\kv_{n,d}$; see Proposition 1.1 in \cite{CGG2002} for more details.

The $s$th secant variety $\sigma_s(\T_{n,d})$ of the tangential variety $\T_{n,d}$ is defined as the Zariski closure of the projectivization of the image of the map
\begin{align*}
 \widetilde{\sigma}_{s}: 
\left(\widehat{\T}_{n,d}\right)^s = \widehat{\T}_{n,d} \times \cdots \times  
\widehat{\T}_{n,d} &\rightarrow S_d(\C^{n+1}) \\
(\ell_1^{d-1}m_1, \dots, \ell_s^{d-1}m_s) &\mapsto \sum_{i=1}^s \ell_i^{d-1}m_i.
\end{align*}
Its dimension satisfies inequality (\ref{eq:expectedDimension}). If equality holds, or, equivalently, if the $s$th secant variety of the tangential variety $\T_{n,d}$ is nondefective, then we say that \emph{the statement $T(n, d; s)$ is true}, otherwise we say that it is \emph{false}. If $(2n+1)s \leq N(n,d)$, then we say that $T(n, d; s)$ is \emph{subabundant}. On the other hand, if $(2n+1)s \geq N(n,d)$, then we say that $T(n, d; s)$ is \emph{superabundant}. A statement that is simultaneously subabundant and superabundant is called \emph{equiabundant}.

For determining the dimension of $\sigma_s(\T_{n,d})$, we may consider the dimension of the projective  tangent space $T_{p}\bigl( \sigma_s(\T_{n,d}) \bigr)$ at a generic point $p \in \sigma_s( \T_{n,d} )$. The following classic theorem, which is essentially due to Terracini \cite{Terracini1911}, describes the tangent space to the secant variety $\sigma_s(\T_{n,d})$ at a generic point in terms of tangent spaces to $\T_{n,d}$.

\begin{lemma}[Terracini's lemma] 
\label{th:terracini}
Let $[\ell_1^{d-1}m_1], \dots, [\ell_s^{d-1}m_s] \in \T_{n,d}$ be $s$ generic points, and let $q$ be a generic point of the subspace 
of $\P(S_d(U))$ spanned by them. 
Then, the affine cone over the tangent space at $q$ is given by
\[
\widehat{T}_q \bigl( \sigma_s (\T_{n,d} ) \bigr) = \sum_{i=1}^s \widehat{T}_{[\ell_i^{d-1}m_i]}(\T_{n,d}).
\] 
\end{lemma}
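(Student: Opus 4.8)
The plan is to derive the statement from the general form of Terracini's lemma applied to the parametrization $\widetilde{\sigma}_s$ of (the affine cone over) $\sigma_s(\T_{n,d})$, by computing the differential of this map at a generic point and identifying its image with the asserted sum of tangent spaces. Concretely, first I would recall that $\sigma_s(\T_{n,d})$ is the Zariski closure of the projectivization of the image of
\[
\widetilde{\sigma}_s\colon (\widehat{\T}_{n,d})^s \longrightarrow S_d(U),\qquad
(v_1,\dots,v_s)\longmapsto \sum_{i=1}^s v_i ,
\]
so that for a generic point $(v_1,\dots,v_s)$ with $v_i=\ell_i^{d-1}m_i$ the image of the differential $d\widetilde{\sigma}_s$ at that point is $\sum_{i=1}^s \widehat{T}_{v_i}(\widehat{\T}_{n,d})$, simply because $\widetilde{\sigma}_s$ is the restriction of the (linear) summation map $S_d(U)^s\to S_d(U)$ to the smooth locus of the product $(\widehat{\T}_{n,d})^s$. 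Since the affine cone $\widehat{\T}_{n,d}$ is a cone, $\widehat{T}_{v_i}(\widehat{\T}_{n,d})$ coincides with $\widehat{T}_{[v_i]}(\T_{n,d})$, the affine cone over the projective tangent space, which by \eqref{eq:tangentSpace} equals $\ell_i^{d-1}U+\ell_i^{d-2}m_iU$. Taking Zariski closure and passing to the projectivization does not change the (affine cone over the) tangent space at a generic smooth point, which yields the claimed formula.

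The one point requiring a little care is genericity: I would note that $(\widehat{\T}_{n,d})^s$ is irreducible (as $\T_{n,d}$ is irreducible of dimension $2n$ by the discussion following \eqref{eq:tangentSpace}), hence so is the closure of the image of $\widetilde{\sigma}_s$; the locus of smooth points of the source and the locus where $d\widetilde{\sigma}_s$ has maximal rank are nonempty Zariski-open, so a generic choice of the $[\ell_i^{d-1}m_i]$ lands in them, and the image point $q=\sum_i \ell_i^{d-1}m_i$ is then a generic (hence smooth) point of $\sigma_s(\T_{n,d})$ at which the tangent space is computed by the differential. One should also record that the parametrization is dominant onto $\widehat{\sigma_s(\T_{n,d})}$ essentially by definition, so that "generic point of the span of the $s$ points" and "generic point of $\sigma_s(\T_{n,d})$" agree up to a nonempty open set, which is all that is needed.

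The main obstacle is essentially bookkeeping rather than a genuine difficulty: one must be scrupulous in distinguishing the tangent space to the affine cone from the affine cone over the projective tangent space, and in transferring Terracini's lemma from the abstract join/secant setting to this specific parametrized situation — i.e., checking that the differential of the summation map really does split as the sum of the individual tangent spaces at a generic point, and that Zariski closure is harmless for tangent-space computations at generic smooth points. Since this is the classical Terracini lemma specialized to $\T_{n,d}$, I would keep the argument brief, citing \cite{Terracini1911} (or a standard modern reference) for the general statement and only spelling out the identification $\widehat{T}_{v_i}(\widehat{\T}_{n,d})=\ell_i^{d-1}U+\ell_i^{d-2}m_iU$ via \eqref{eq:tangentSpace}.
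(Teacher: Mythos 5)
Your proposal is correct and follows essentially the same route as the paper: both identify $\widetilde{\sigma}_s$ as the restriction of the linear summation map $S_d(U)^s\to S_d(U)$ to $(\widehat{\T}_{n,d})^s$, compute the tangent space to the product as the direct sum $\bigoplus_{i=1}^s \widehat{T}_{[\ell_i^{d-1}m_i]}(\T_{n,d})$, and conclude that the image of the differential is the claimed sum. The extra remarks you make about genericity and the cone/projective-tangent-space identification are sound but not spelled out in the paper's terser proof.
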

\begin{proof}
The tangent space to $(\widehat{\T}_{n,d})^s$ at $(\ell_1^{d-1}m_1, \dots, \ell_s^{d-1}m_s)$ is given by $\bigoplus_{i=1}^s \widehat{T}_{[\ell_i^{d-1} m_i]}(\T_{n,d})$. Since $\widetilde{\sigma}_s$ is obtained from 
the map $(S_d(U))^s \rightarrow S_d(U)$ defined by sending $(v_1, \dots, v_s)$ to $\sum_{i=1}^s v_i$ by restricting to $(\widehat{\T}_{n,d})^s$, the image of the differential of $\widetilde{\sigma}_s$ at $\sum_{i=1}^s \ell_i^{d-1}m_i$ is $\sum_{i=1}^s \widehat{T}_{[\ell_i^{d-1} m_i]}\left(\T_{n,d}\right)$, which completes the proof. 
\end{proof}
\begin{remark}
\label{rem:semi-continuity}
Terracini's lemma will be the main computational tool that we employ in Section~\ref{sec_base} for proving the truths of the base cases of our inductive proof of Theorem~\ref{thm_main}. Note that if $\sum_{i=1}^s \widehat{T}_{p_i}(\T_{n,d})$ has the expected dimension for some specific choice of points $p_1, \ldots, p_s \in \T_{n,d}$, then the foregoing linear space also has the expected dimension for a generic choice of points, by semicontinuity. Thus, in order to prove the truth of $T(n,d;s)$, it suffices to show that $\dim \sum_{i=1}^s \widehat{T}_{p_i}(\T_{n,d})$ equals the expected value for any convenient choice of points $p_1, \ldots, p_s \in \T_{n,d}$.
\end{remark}
\subsection{Hilbert function} \label{sec_hilbert_function}
Let $p \in \T_{n,d}$ be generic. Then, there exist two distinct points $[\ell], [m] \in \P(U)$ such that $p = [\ell^{d-1}m]$. Let $L$ be the line in $\P(U)$ passing through $[\ell]$ and $[m]$. We call the zero-dimensional closed subscheme of $\P(U)$ defined by $I_{[\ell]}^3+I_L^2$ the \emph{$(2,3)$-point} associated with $p$ and denote it by $p^{2,3}$. 

In~\cite[Section~2]{CGG2002}, Catalisano, Geramita, and Gimgliano demonstrated that the subspace $H^0 \bigl(\P(U), \sI_{p^{2,3}}(d)\bigr)$ formed by hypersurfaces of degree $d$ in $\P(U)$ containing $p^{2,3}$ can be identified with the subspace of hyperplanes in $\P(S_d(U))$ containing $T_p(\T_{n,d})$. In particular, the Hilbert function is
\begin{align*}
h_{\P(U)}\Bigl(p^{2,3},d\Bigr) = N(n,d) - \dim \left(I_{[\ell]}^3+I_L^2\right)_d 
&=   N(n,d)  - \dim H^0 \left(\P(U), \sI_{p^{2,3}}(d)\right) \\
&=  \dim \widehat{T}_p(\T_{n,d}) \\
&=  2n+1.  
\end{align*}

Let $p_1, \ldots, p_s \in \T_{n,d}$ be generic points, let $p_1^{2,3}, \ldots, p_s^{2,3}$ be their respective associated~$(2,3)$-points, and let $Z = \{p_1^{2,3}, \ldots, p_s^{2,3}\}$. Since 
\[
H^0\left(\P(U), \sI_{Z}(d)\right) = (I_Z)_d = \left(\bigcap_{i=1}^s I_{p_i^{2,3}}\right)_d = \bigcap_{i=1}^s \left(I_{p_i^{2,3}}\right)_d   = \bigcap_{i=1}^s H^0\left(\P(U), \sI_{p_i^{2,3}}(d)\right),  
\]
we can view $H^0\left(\P(U), \sI_{Z}(d)\right)$ as the subspace of $S_d(U)$ spanned by hyperplanes containing the linear span of $T_{p_1}(\T_{n,d}), \dots, T_{p_s}(\T_{n,d})$. Furthermore, since each $p_i^{2,3}$ imposes $2n+1$ linearly independent 
conditions on hypersurfaces of degree $d$, 
we have 
\[
\dim H^0\left(\P(U), \sI_{Z}(d)\right) \geq \max \left\{ N(n,d) - (2n+1)s, \, 0\right\},  
\]
or, equivalently, 
\begin{equation}
\label{eq:hilbert-function}
h_{\P(U)}(Z,d)  =  N(n,d) - \dim H^0\left(\P(U), \sI_{Z}(d)\right) \leq \min\left\{(2n+1)s, \, N(n,d)\right\},  
\end{equation}
with equality occurring if and only if one of the following holds: 
\begin{itemize}
\item[(i)] $s \leq \left\lfloor N(n,d) /(2n+1)\right\rfloor$ and all the $p_i^{2,3}$ impose linearly independent conditions on hypersurfaces of degree $d$; 
\item[(ii)] $s \geq \left\lceil N(n,d) /(2n+1)\right\rceil$ and there are no hypersurfaces of degree $d$ containing $Z$. 
\end{itemize}
Therefore, it follows from Lemma~\ref{th:terracini} that showing the truth of $T(n,d;s)$ is equivalent to showing that the Hilbert function of $Z$ has the expected value at $d$. 

\section{Induction}
\label{sec:induction}
In the remainder, we shall be concerned with proving Theorem \ref{thm_main} in the case $d=3$, as the general case $d\ge4$ then follows from \cite[Corollary 2.5]{BCGI}. For brevity, we shall write $\kv_n$ for the third Veronese embedding of $U$ in $\P(S_3(U))$ and $\T_n$ for its tangential variety. The affine cone over $\T_n$ will be denoted by $\widehat{\T}_n$ and the dimension of $\P(S_3(U))$ is denoted by $N(n) = N(n,3) = \binom{n+3}{3}$. The main purpose of this section is to introduce the inductive method that we pursue for proving that the secant varieties $\sigma_s(\T_n)$ of $\T_n$ have the expected dimension except for $s=n=2,3,4$.

\subsection{Subabundance and superabundance}\label{sec_sub_sup}
Proving nondefectivity of all $s$th secant varieties of $\T_{n}$ is simplified by the fact that usually only two cases need to be proved, namely $\sigma_{s_1}( \T_{n,d} )$ and $\sigma_{s_2}( \T_{n,d} )$, where $s_1$ is the largest integer such that $T(n, 3; s_1)$ is subabundant and $s_2$ is the least integer such that $T(n, 3; s_2)$ is superabundant. Naturally $s_2 - s_1 \le 1$. We claim that $s_1$ and $s_2$ have the following explicit expressions if $n \ge 8$. 
For such an $n$, let $k$ and $r$ be the quotient and the remainder after division of $n$ by $24$. Define
\[
 s_1(n) = 48k^2+(11+4r)k+\left\lfloor (4r^2+22r+33)/48 \right\rfloor \;\mbox{ and }\; 
 s_2(n) = s_1(n) + 1.
\]
Note that, for $i \in \{1,2\}$ and $n \ge 32$, we may define
\[
 t(n) = s_i(n) - s_i(n-24) = 96k + 4r - 37 = 4n-37,
\]
and, hence, for $n \ge 56$ one has $t(n) - t(n-24) = 96$. The next lemma entails that $T(n,3;s_1(n))$ is subabundant and $T(n,3;s_2(n))$ is superabundant.

\begin{proposition}
\label{eq:s}
Let $n \geq 8$. Then, $s_1(n) = \left\lfloor N(n)/(2n+1) \right\rfloor$ and $s_2(n) = \left\lceil N(n)/(2n+1) \right\rceil$. 
\end{proposition}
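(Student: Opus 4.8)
The plan is to carry out an exact division of $N(n)$ by $2n+1$ and then recover the floor and the ceiling by reducing $n$ modulo $24$. The whole statement is a bookkeeping exercise once the right identity is in hand.

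First I would establish the key identity
\[
48\,N(n) \;=\; (2n+1)\bigl(4n^2+22n+33\bigr) + 15 .
\]
This is checked directly by expanding $48\binom{n+3}{3} = 8(n+1)(n+2)(n+3) = 8n^3+48n^2+88n+48$ against the right-hand side; alternatively, the substitution $m = 2n+1$ turns $48\,N(n)$ into $(m+1)(m+3)(m+5) = m^3+9m^2+23m+15 = m(m^2+9m+23)+15$, and $m^2+9m+23 = 4n^2+22n+33$. Dividing by $48(2n+1)$ then gives
\[
\frac{N(n)}{2n+1} \;=\; \frac{4n^2+22n+33}{48} + \varepsilon_n, \qquad \varepsilon_n := \frac{5}{16(2n+1)} .
\]
Next I would write $n = 24k+r$ with $0 \le r \le 23$ (so $k$, $r$ are exactly as in the statement) and expand $4n^2+22n+33 = 2304k^2+192kr+528k+4r^2+22r+33$, which yields
\[
\frac{4n^2+22n+33}{48} \;=\; 48k^2 + (11+4r)k + \frac{4r^2+22r+33}{48},
\]
the first two summands being integers. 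Consequently
\[
\Bigl\lfloor \tfrac{N(n)}{2n+1} \Bigr\rfloor \;=\; 48k^2 + (11+4r)k + \Bigl\lfloor \tfrac{4r^2+22r+33}{48} + \varepsilon_n \Bigr\rfloor,
\]
so the claimed formula for $s_1(n)$ is precisely the assertion that the extra term $\varepsilon_n$ does not change this last floor.

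The one point that needs care is therefore showing $\bigl\lfloor \tfrac{4r^2+22r+33}{48} + \varepsilon_n \bigr\rfloor = \bigl\lfloor \tfrac{4r^2+22r+33}{48} \bigr\rfloor$, and this is where the hypothesis $n \ge 8$ enters. I would combine two observations: (a) $4r^2+22r+33$ is odd, hence never divisible by $48$, and its residue modulo $48$ is an odd integer $\le 47$, so the fractional part of $\tfrac{4r^2+22r+33}{48}$ is at most $\tfrac{47}{48}$; (b) for $n \ge 8$ we have $2n+1 \ge 17$, whence $\varepsilon_n = \tfrac{5}{16(2n+1)} \le \tfrac{5}{16\cdot 17} < \tfrac{1}{48}$. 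Adding $\varepsilon_n$ therefore keeps the value strictly below the next integer, and the floor is unchanged; this proves $\lfloor N(n)/(2n+1)\rfloor = s_1(n)$. Finally, since $48\,N(n) \equiv 15 \pmod{2n+1}$ and $2n+1 \ge 17 > 15$ cannot divide $15$, it does not divide $N(n)$ either, so $N(n)/(2n+1) \notin \Z$; hence $\lceil N(n)/(2n+1)\rceil = \lfloor N(n)/(2n+1)\rfloor + 1 = s_1(n)+1 = s_2(n)$, completing the proof. I do not anticipate any real obstacle here: the argument is elementary, and the only subtlety is the joint use of $\varepsilon_n < \tfrac{1}{48}$ and the parity of $4r^2+22r+33$, which is exactly what pins down the bound $n \ge 8$.
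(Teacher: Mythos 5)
Your proof is correct, and it follows essentially the same strategy as the paper's: both rest on the exact polynomial division of $N(n)$ by $2n+1$, which in your notation reads $48N(n) = (2n+1)(4n^2+22n+33) + 15$, and both then reduce $n$ modulo $24$ so that the integer part of $(4n^2+22n+33)/48$ is $48k^2 + (11+4r)k$ plus whatever $\lfloor (4r^2+22r+33)/48\rfloor$ contributes. Where you diverge is in the final step. The paper establishes the strict inequality $0 < f(r) - \lfloor f(r)\rfloor + \tfrac{5}{16(2n+1)} < 1$ by first treating $k=0$ as a ``straightforward calculation'' over the $24$ residues $r$ and then observing the $\varepsilon$-term only shrinks as $k$ grows; you instead note that $4r^2+22r+33$ is always odd, hence its residue mod $48$ is at most $47$, so the fractional part of $f(r)$ is at most $47/48$, and combine this with $\varepsilon_n < 1/48$ for $n \ge 8$ to get a uniform bound without any case enumeration. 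This is cleaner and makes explicit exactly where $n\ge 8$ enters. Your argument for $N(n)/(2n+1)\notin\Z$ via the congruence $48N(n)\equiv 15 \pmod{2n+1}$ and $2n+1 \ge 17 > 15$ is also a bit tidier than the paper's (which infers non-integrality from the strict inequality on the fractional part). Both routes buy the same result; yours is somewhat more self-contained since it eliminates the implicit finite check. Minor aside: the paper's parenthetical ``equality iff $r\in\{1,2,7\}$'' at $k=0$ appears to be a typo for $r\in\{0,2,7\}$; your uniform bound sidesteps the issue entirely.
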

\begin{proof}
%
As before, we write $k$ and $r$ for the quotient and remainder in the division of $n$ by $24$ respectively. Consider~$N(24k+r)$ as a polynomial in $k$. Then the quotient and remainder when dividing it by $48k+2r+1 = 2(24k+r)+1$ are  
$48k^2+(4r+11)k+(4r^2+22r+33)/48$ and $5/16$ respectively. Let $M(24k+r) = N(24k+r)- \bigl(48k^2+(4r+11)k\bigr)(48k+2r+1)$, and let $f(r) = (4r^2+22r+33)/48$. Then,
\[
\frac{M(24k+r)}{48k+2r+1} = f(r)+\frac{5}{16(48k+2r+1)}. 
\]
Therefore, in order to prove Proposition~\ref{eq:s}, it suffices to show that 
\[
\left\lfloor \frac{M(24k+r)}{48k+2r+1} \right\rfloor = \left\lfloor  f(r) \right\rfloor 
\]
for $n \ge 8$. To do so, it is enough to show that 
\begin{equation}
\label{eq:integer-part}
0 <\; f(r)-\left\lfloor f(r) \right\rfloor + \frac{5}{16(48k+2r+1)} < 1.
\end{equation}

Assume that $k = 0$, i.e., $n=r$. Then, straightforward calculations show that 
\[
0 <\; f(r)-\left\lfloor f(r) \right\rfloor + \frac{5}{16(2r+1)} \leq 1
\]  
with equality if and only if $r\in \{1,2,7\}$. This also implies that inequality~(\ref{eq:integer-part}) holds for every $k \in \N$ and for every $r \in \{0, \dots, 23\}$, because 
\[
0 <\; \frac{5}{16(48k+2r+1)} < \frac{5}{16(2r+1)} 
\]
for such a $k$. We can therefore conclude that if  $n\in \N$ with $n \geq 8$, then $N(n)/(2n+1) \not\in \N$ and $s_1(n) = \left\lfloor N(n)/(2n+1) \right\rfloor$, from which it follows immediately that~$s_2(n) = \left\lceil N(n)/(2n+1) \right\rceil$. Thus, we completed the proof. 
\end{proof}
The foregoing implies that $s_1(n)$ is the largest value of $s$ for which $T(n, 3; s)$ is subabundant, and, similarly, $s_2(n)$ is the smallest value of $s$ for which $T(n, 3; s)$ is superabundant. The following is an immediate consequence of Lemma~\ref{th:terracini}. 
\begin{lemma}
Let $s \ge 2$. We have:
\begin{itemize}
 \item[(i)] If $T(n, d; s)$ is true and subabundant, then $T(n, d; s-1)$ is also true and subabundant; and
 \item[(ii)] If $T(n, d; s)$ is true and superabundant, then $T(n, d; s+1)$ is also true and superabundant.
\end{itemize}
\end{lemma}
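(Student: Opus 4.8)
The plan is to prove both statements by comparing the expected dimension of $\sigma_s(\T_n)$ with that of $\sigma_{s\pm1}(\T_n)$, using Terracini's lemma (Lemma~\ref{th:terracini}) together with the fact that the affine cone over the tangent space to $\sigma_s(\T_n)$ at a generic point is a sum of $s$ tangent spaces $\widehat{T}_{p_i}(\T_n)$, each of dimension $2n+1$. For part (i), suppose $T(n,d;s)$ is true and subabundant, so $(2n+1)s \le N(n,d)$ and $\dim\widehat{T}_q(\sigma_s(\T_{n,d})) = (2n+1)s$ for generic points. I would first note that subabundance of $T(n,d;s)$ forces subabundance of $T(n,d;s-1)$ trivially, since $(2n+1)(s-1) < (2n+1)s \le N(n,d)$. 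For the truth of $T(n,d;s-1)$, I would choose generic points $p_1,\dots,p_s$ witnessing $T(n,d;s)$ and observe that, by Terracini, $\sum_{i=1}^{s}\widehat{T}_{p_i}(\T_{n,d})$ has dimension $(2n+1)s$; hence the $s$ summands are in direct sum, and in particular the first $s-1$ of them already span a space of dimension exactly $(2n+1)(s-1)$. Since $p_1,\dots,p_{s-1}$ are still generic points of $\T_{n,d}$, Terracini's lemma identifies this with $\dim\widehat{T}_{q'}(\sigma_{s-1}(\T_{n,d}))$ for a generic $q'$, which is therefore the expected value; alternatively one invokes semicontinuity as in Remark~\ref{rem:semi-continuity}.

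For part (ii), suppose $T(n,d;s)$ is true and superabundant, so $(2n+1)s \ge N(n,d)$ and $\sigma_s(\T_{n,d}) = \P(S_d(\C^{n+1}))$ (the secant variety fills the ambient space, since its dimension equals $N(n,d)-1$). Superabundance of $T(n,d;s+1)$ is immediate: $(2n+1)(s+1) > (2n+1)s \ge N(n,d)$. For the truth of $T(n,d;s+1)$, I would use the monotonicity of the ascending chain $\sigma_s(\T_{n,d}) \subseteq \sigma_{s+1}(\T_{n,d})$: since $\sigma_s(\T_{n,d})$ already equals the whole ambient space, so does $\sigma_{s+1}(\T_{n,d})$, and thus it trivially has the expected dimension $N(n,d)-1 = \min\{(2n+1)(s+1), N(n,d)\}-1$. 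Here the hypothesis $s\ge2$ is not even needed for (ii); it is only relevant for (i) to ensure $s-1\ge1$ so that $\sigma_{s-1}$ is defined (and $T(n,d;1)$ is automatically true since $\T_{n,d}$ is irreducible of dimension $2n$, though $T(n,d;1)$ is equiabundant only in degenerate cases, so subabundance of $T(n,d;1)$ would need $2n+1 \le N(n,d)$, which holds for $d\ge 2$).

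The argument is essentially formal once one has Terracini's lemma in hand, so there is no serious obstacle; the only point requiring a moment's care is the direction in part (i), where one must extract from the nondefectivity of $\sigma_s$ the nondefectivity of $\sigma_{s-1}$. The key observation making this work is that when $\sum_{i=1}^s\widehat{T}_{p_i}(\T_{n,d})$ is a direct sum of dimension $(2n+1)s$, any subsum of $j$ of the $\widehat{T}_{p_i}(\T_{n,d})$ is automatically a direct sum of dimension $(2n+1)j$; applying this with $j=s-1$ and invoking semicontinuity (Remark~\ref{rem:semi-continuity}) to pass from the specific generic tuple $(p_1,\dots,p_{s-1})$ back to a generic tuple completes the proof.
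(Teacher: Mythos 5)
Your proof is correct and is precisely the argument the paper has in mind: the paper gives no explicit proof, asserting only that the lemma is an immediate consequence of Terracini's lemma, and your two observations — that in the subabundant case the $s$ tangent spaces are in direct sum so any $s-1$ of them already achieve the expected dimension, and that in the superabundant case $\sigma_s$ already fills the ambient space so $\sigma_{s+1}$ trivially does too — are exactly what makes it "immediate."
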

Consequently, for concluding the proof of Theorem~\ref{thm_main}, it suffices to demonstrate that both $T(n, 3; s_1(n))$ and $T(n, 3; s_2(n))$ are true. It is important to recall that the requirement $n \ge 8$ in the explicit definitions of $s_1(n)$ and $s_2(n)$ is not a limitation because $T(n,d;s)$ is known to be true for $n \le 9$. It is also interesting to note that $T(2, 3; 2)$ and $T(7, 3; 8)$ are the only statements that are equiabundant; the former statement is false and the latter statement will be shown to be true. The fact that $T(7, 3; 8)$ is equiabundant is the reason why we restrain ourselves to $n\ge8$ in our proof of Theorem \ref{thm_main}, as it would introduce some unpleasant special cases in the inductive proof that will be considered in Section \ref{sec_specialization}.

\subsection{Convenient subspaces} \label{sec_convenient_subspaces}
In order to prove that $T(n,3;s_i(n))$ is true for each $i \in \{1,2\}$, we specialize a subset of the $s_i(n)$ points of $\T_n$ to $24$-codimensional linear sections of $\T_n$ and show that the linear span of the tangent spaces to $\T_n$ at such points has the expected dimension.

Let $B = \{x_0, \dots, x_n\}$ be a basis of $U$, and then we define the standard subspaces as $U_i = \mathrm{Span}(B \setminus B_i)$ with $B_i = \{x_{24(i-1)}, \dots, x_{24i-1}\}$ for $i=1,2,$ and $3$. We also introduce the notation $\overline{U}_i = \mathrm{Span}(B_i)$ and $L_i = \P(U_i)$. Finally, we shall denote the third Veronese embedding of $U_i$ by 
\(
 \Ss_i = \P(S_3(U_i)) \mbox{ for } i \in \{1,2,3\},
\)
and $\Ss$ denotes $\P(S_3(U))$.

In the next sections, we regularly exploit the following basic property of the subspaces $U_i$: Let $K \subseteq \{1,2,3\}$. Then, 
\begin{align*}
\bigcap_{j \in K} \Ss_j = \P\Bigl( S_3\bigl( \cap_{j \in K} U_j \bigr) \Bigr).
\end{align*}
A proof of the above statement is obtained by observing that $B \setminus \bigl(\cup_{j\in K} B_j\bigr)$ is a basis for the affine cone over the left-hand side of the equality.
As a consequence, we also find
\begin{align*}
p = [\ell^{d-1} m] \in \T_{n,d} \cap \bigl(\cap_{j \in K} \Ss_j\bigr) \;\mbox{ iff }\; \ell, m \in \bigcap_{j \in K} U_j.
\end{align*}
These statements can be proved in a straightforward manner.

We start with the following observation. Consider any $i \in\{1,2,3\}$, and let $q \in \T_n \cap \Ss_i$ be generic. Then, it follows from the above observation that there exist generic $\ell, m \in U_i$ such that $q = [\ell^2m]$. The affine cone over the tangent space at $q$ to $\T_n$ is given by $\widehat{T}_q(\T_n) = \ell^2 U + \ell m U$ (see~(\ref{eq:tangentSpace})). Note that adding $S_3(U_i)$ to the previous equality results in $\widehat{T}_q(\T_n) + S_3(U_i) = \ell^2\overline{U}_i + \ell m \overline{U}_i + S_3(U_i)$. This sum of vector spaces is direct when $\ell \not\in [m]$, which is satisfied by a generic choice of $\ell, m \in U_i$ if $\dim U_i = n - 24 > 1$. 
It follows, therefore, that 
\begin{equation}
\label{eq:vector-space-sum}
\dim \left( S_3(U_i) + \widehat{T}_q(\T_n)\right) = N(n-24)+48.
\end{equation}

The next observation concerns the dimension of the vector space sum of some of the $S_3(U_i)$'s. 
\begin{lemma} \label{lem_basis}
Let $1 \le k \le 3$. Then we have
\[
 \dim \sum_{j=1}^k S_3(U_j) = \sum_{j=1}^k (-1)^{j-1} \binom{k}{j} N(n-24j).
\]
\end{lemma}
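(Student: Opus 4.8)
The plan is to prove the formula by inclusion--exclusion, using the key fact established just above the lemma, namely that $\bigcap_{j \in K} \Ss_j = \P(S_3(\cap_{j\in K} U_j))$ for any $K \subseteq \{1,2,3\}$, together with the observation that $B \setminus (\cup_{j \in K} B_j)$ is a basis for the affine cone $S_3(\cap_{j \in K} U_j)$. Passing to affine cones, write $V_j = S_3(U_j) \subseteq S_3(U)$; then the claim is equivalent to
\[
 \dim \sum_{j=1}^k V_j = \sum_{j=1}^k (-1)^{j-1} \binom{k}{j} N(n-24j).
\]

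First I would record the monomial description of each $V_j$: fixing the basis $B = \{x_0,\dots,x_n\}$ of $U$, the space $S_3(U)$ has a basis consisting of degree-$3$ monomials in $B$, and $V_j = S_3(U_j)$ is spanned by exactly those monomials that involve none of the variables in $B_j$. Consequently, for any $J \subseteq \{1,\dots,k\}$, the intersection $\bigcap_{j \in J} V_j = S_3(\cap_{j\in J} U_j)$ is spanned by the monomials avoiding all variables in $\cup_{j \in J} B_j$; since the $B_j$ are pairwise disjoint $24$-element sets, $\cup_{j\in J} B_j$ has $24|J|$ elements, so $\dim \bigcap_{j\in J} V_j = N(n - 24|J|)$. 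The same monomial-basis description shows that the whole lattice of subspaces generated by $V_1, \dots, V_k$ under sum and intersection is ``combinatorial'': each such subspace is spanned by the monomials in a suitable subset of $B$, so sums and intersections of the $V_j$ correspond to unions and intersections of index sets. In particular the sum $\sum_{j} V_j$ is spanned by the monomials avoiding $\cap_{j=1}^k B_j = \emptyset$, i.e.\ it is spanned by the monomials that avoid $B_j$ for at least one $j$.

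Next I would invoke the inclusion--exclusion (Grassmann-type) formula for the dimension of a sum of subspaces that are spanned by subsets of a common basis. Because every $V_j$ is a coordinate subspace with respect to the monomial basis, we have the exact identity $\dim \sum_{j=1}^k V_j = \sum_{\emptyset \ne J \subseteq \{1,\dots,k\}} (-1)^{|J|-1} \dim \bigcap_{j \in J} V_j$; this is just ordinary inclusion--exclusion applied to counting monomials, valid without any transversality hypothesis precisely because these are coordinate subspaces. Substituting $\dim \bigcap_{j\in J} V_j = N(n-24|J|)$ and collecting the $\binom{k}{j}$ subsets $J$ of each size $j$ gives
\[
 \dim \sum_{j=1}^k V_j = \sum_{j=1}^k (-1)^{j-1}\binom{k}{j} N(n-24j),
\]
which is the asserted formula. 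One should also note the edge convention that $N(m) = \binom{m+3}{3}$ is to be read as $0$ when $m < 0$ (equivalently, $S_3$ of a zero or ``negative-dimensional'' space is $0$); this only matters when $n < 24j$ for some $j \le k$, and it is consistent with the fact that the corresponding intersection of subspaces is then trivial.

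The only real point requiring care --- and hence the ``main obstacle,'' though it is a mild one --- is justifying the exact inclusion--exclusion identity for dimensions of sums of subspaces: for general subspaces one only has $\dim(A+B) = \dim A + \dim B - \dim(A\cap B)$ and the naive three-or-more-term generalization fails. The clean way around this is exactly the coordinate-subspace structure established in the first step: since each $V_j = \mathrm{Span}(M_j)$ for a set $M_j$ of basis monomials and $V_I \cap V_J = \mathrm{Span}(M_I \cap M_J)$, $V_I + V_J = \mathrm{Span}(M_I \cup M_J)$, the dimension count reduces to $|M_1 \cup \dots \cup M_k|$, a genuine set cardinality, for which inclusion--exclusion is unconditionally valid. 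I would therefore present the argument by first reducing to counting monomials and only then applying inclusion--exclusion, rather than trying to manipulate dimensions of subspaces directly.
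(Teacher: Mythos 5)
Your proof is correct and follows essentially the same route as the paper: reduce to the monomial basis, observe that each $S_3(U_j)$ is a coordinate subspace spanned by the monomials avoiding $B_j$, and compute the dimension of the sum as a cardinality $|F_1\cup\cdots\cup F_k|$ via the Inclusion--Exclusion Principle. The extra remarks you make (why naive Grassmann-type inclusion--exclusion fails for general subspaces, and the $N(m)=0$ convention for $m<0$) are sound but not needed beyond what the paper records.
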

\begin{proof}
With our choice of basis of $U$, a basis of $S_3(U)$ is given by $E = \{ x_i x_j x_k \;|\; 0 \le i \le j \le k \le n\}$. Similarly, given our choice of the subspaces $U_l$, it follows that a basis of $S_3(U_l)$, $l=1,2,3$, is given explicitly by 
\[
F_l = \Bigl\{ x_i x_j x_k \;|\; 0 \le i \le j \le k \le n,\; i,j,k \not\in \{ 24(l-1), 24(l-1)+1, \ldots, 24l-1 \} \Bigr\}.
\]
Joining the bases, we note that determining the dimension of the span of $F = F_1 \cup \cdots \cup F_k$ becomes a particularly simple task because any two elements $v, w \in F$ are either equal or distinct basis vectors of $E$. Hence, it suffices to count the number of distinct vectors $x_i x_j x_k$ appearing in $F$. This is the problem of computing $|F_1 \cup \cdots \cup F_k|$, which can be solved directly with the Inclusion--Exclusion Principle, concluding the proof.
\end{proof}

Based on (\ref{eq:vector-space-sum}) and Lemma~\ref{lem_basis}, we are led to the following proposition.
\begin{proposition}
\label{prop_statement}
Let $\chi:\N \cup \{0\} \rightarrow \{0,1\}$ be the function  defined by 
\[
 \chi (a) = \left\{ 
 \begin{array}{ll}
 0 & \mbox{if $a = 0$}, \\
 1 & \mbox{if $a \not =0$}. 
 \end{array}
 \right. 
\]
For a given nonnegative integer $s$, let $p_1, \dots, p_s$ be generic points of $\T_n$. For each $i \in \{1,2,3\}$ and a triplet $(a_1, a_2, a_3)$ of nonnegative integers, let $q_{i,1}, \dots, q_{i,a_i}$ be generic points of $\T_n \cap \Ss_i$. Let $k = \chi(a_1)+\chi(a_2)+\chi(a_3)$. Then, the subspace 
\begin{equation}
\label{eq:subspace} 
 W(n,s;a_1,a_2,a_3) = \sum_{i=1}^3 \chi(a_i)\left( S_3(U_i) + \sum_{j=1}^{a_i} \widehat{T}_{q_{i,j}} \left( \T_n \right) \right) + 
 \sum_{i=1}^s \widehat{T}_{p_i}\left(\T_n \right)
\end{equation}
of $S_3(U)$ has at most dimension
\begin{equation}
 \label{eq:dimension}
\min \left\{ 
\sum_{j=1}^k (-1)^{j-1} \binom{k}{j} N(n-24j)+48\sum_{i=1}^3 a_i+ (2n+1)s, \ N(n) \right\}. 
\end{equation}
\end{proposition}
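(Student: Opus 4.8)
The plan is to bound $\dim W(n,s;a_1,a_2,a_3)$ by first bounding the ``specialized part'' $\sum_{i=1}^3 \chi(a_i)\bigl(S_3(U_i) + \sum_{j=1}^{a_i} \widehat{T}_{q_{i,j}}(\T_n)\bigr)$ alone, and then simply adding the contribution of the generic tangent spaces $\widehat{T}_{p_i}(\T_n)$. Since each tangent space to $\T_n$ has dimension $2n+1$ (as recorded in Section~\ref{sec_hilbert_function}) and $S_3(U)$ has dimension $N(n)$, the crude estimate $\dim(V + W) \le \dim V + \dim W$ applied repeatedly gives that $\dim W(n,s;a_1,a_2,a_3)$ is at most the dimension of the specialized part plus $(2n+1)s$, and of course at most $N(n)$; so the whole statement reduces to showing that the specialized part has dimension at most $\sum_{j=1}^k (-1)^{j-1}\binom{k}{j} N(n-24j) + 48\sum_{i=1}^3 a_i$.

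For the specialized part, the key structural input is already assembled in the excerpt. First, for each index $i$ with $\chi(a_i)=1$, the generic points $q_{i,j}$ lie in $\T_n \cap \Ss_i$, so by the observation preceding~(\ref{eq:vector-space-sum}) each $\widehat{T}_{q_{i,j}}(\T_n)$ decomposes as $\ell_{i,j}^2 U + \ell_{i,j} m_{i,j} U$ with $\ell_{i,j}, m_{i,j} \in U_i$; adding $S_3(U_i)$ absorbs the parts already lying in $S_3(U_i)$, leaving only the ``new'' summands $\ell_{i,j}^2 \overline{U}_i + \ell_{i,j} m_{i,j} \overline{U}_i$, each of which contributes at most $48 = 2\cdot 24$ dimensions. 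Hence
\begin{align*}
\dim\Bigl( S_3(U_i) + \sum_{j=1}^{a_i} \widehat{T}_{q_{i,j}}(\T_n) \Bigr) \le \dim S_3(U_i) + 48 a_i = N(n-24) + 48 a_i.
\end{align*}
Second, summing over the (at most three) active indices $i$, the subspaces $S_3(U_i)$ together span $\sum_{j=1}^k S_3(U_j)$ up to relabeling, whose dimension is exactly $\sum_{j=1}^k (-1)^{j-1}\binom{k}{j} N(n-24j)$ by Lemma~\ref{lem_basis}; the $48 a_i$ extra dimensions from each active index simply add, giving the desired bound $\sum_{j=1}^k (-1)^{j-1}\binom{k}{j} N(n-24j) + 48\sum_{i=1}^3 a_i$ for the specialized part. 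Combining the two steps yields~(\ref{eq:dimension}).

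The only genuinely delicate point—and the step I expect to be the main obstacle—is bookkeeping the relabeling and the case analysis on which $\chi(a_i)$ vanish: Lemma~\ref{lem_basis} is stated for the \emph{first} $k$ subspaces $U_1,\dots,U_k$, whereas here the active indices form an arbitrary subset of $\{1,2,3\}$ of size $k$, so one must observe that the Inclusion--Exclusion count is symmetric in the three blocks $B_i$ and hence depends only on $k$, not on which blocks are chosen. Beyond that, one should note that the inequality is an upper bound only, so no claim of directness or transversality of the various sums is needed here—this is what makes the proof short; the real content (that equality holds, i.e.\ that these sums are as transverse as possible) is deferred to the inductive argument of later sections.
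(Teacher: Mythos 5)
Your proposal is correct and fills in exactly the elementary dimension-counting argument the paper leaves implicit (the paper gives no formal proof, merely citing the dimension count~(\ref{eq:vector-space-sum}) and Lemma~\ref{lem_basis} as the ingredients, which are precisely what you use). Your observation about the relabeling of indices when the active subset of $\{1,2,3\}$ is not an initial segment is a genuine, if minor, point the paper glosses over; the symmetry of the blocks $B_i$ handles it as you say.
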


Following \cite{AOP2009}, we introduce the following terminology. Let $w(n,s;a_1,a_2,a_3)$ be the integer~(\ref{eq:dimension}) and let $T(n, s;a_1,a_2,a_3)$ be the statement that (\ref{eq:subspace}) has the expected dimension, i.e.,  it has dimension $w(n,s;a_1,a_2,a_3)$. Note that  $T(n,s;0,0,0)$ is the same as $T(n, 3;s)$. 
If $w(n, s; a_1, a_2, a_3)$ equals the first item in the minimization in (\ref{eq:dimension}), then we say that the statement $T(n, s; a_1, a_2, a_3)$ is \emph{subabundant}. On the other hand, if $w(n, s; a_1, a_2, a_3) = N(n)$, then we say that the statement $T(n, s; a_1, a_2, a_3)$ {is} \emph{superabundant}. If $T(n, s; a_1, a_2, a_3)$ is simultaneously subabundant and superabundant, then we say that it is \emph{equiabundant}.

\begin{remark}
\label{rem:hilbert-function-2}
Recall from Section~\ref{sec_hilbert_function} that if $p \in \T_n$ is generic, then $p^{2,3}$ denotes the $(2,3)$-point associated with $p$. 
Let $p_1, \dots, p_s \in \T_n \setminus \bigcup_{i=1}^3 \Ss_i$. For each $i \in \{1,2,3\}$, let $q_{i,1}, \dots, q_{i,a_i} \in \T_n \cap \Ss_i$. For each $i \in \{1,2,3\}$, the symbol $\chi(a_i)L_i$ means $L_i$ if $a_i \not=0$ and $\emptyset$ otherwise. Let $L = \bigcup_{i=1}^3 \chi(a_i)L_i$. Consider the following zero-dimensional subscheme of $\P(U)$: 
\[
Z = \bigcup_{i=1}^3 \left\{ q_{i,1}^{2,3}, \dots, q_{i,a_i}^{2,3}\right\} \cup  \left\{p_1^{2,3}, \dots, p_s^{2,3}\right\}. 
\]
As a cubic hypersurface contains $L$ if and only if the corresponding hyperplane in $\Ss$ contains the third Veronese embedding of $L$ in $\Ss$, it follows that $H^0(\P(U),\sI_{Z \cup L}(3))$ can also be identified with the subspace spanned by the hyperplanes containing the third Veronese embedding of $L$ as well as the linear span of the tangent spaces at the corresponding points.
Therefore, the subspace $H^0\left(\P(U), \sI_{Z \cup L}(3)\right)$ spanned by the cubic hypersurfaces of $\P(U)$ containing $Z \cup L$, or, equivalently, the linear span of $Z \cup L$, can be identified with the subspace of $\Ss$ spanned by the hyperplanes containing the projectivization of $W(n,s;a_1,a_2,a_3)$. 
In particular, the value of the Hilbert function of $Z\cup L$ at $3$, i.e.,  
\begin{align*}
h_{\P(U)}\left(Z\cup L,3\right) = N(n) - \dim H^0\left(\P(U), \sI_{Z\cup L}(3)\right) \leq N(n) - w(n,s;a_1,a_2,a_3), 
\end{align*}
equals the dimension of $W(n,s;a_1,a_2,a_3)$. This implies that the following are equivalent: 
\begin{itemize}
 \item[(i)] The statement $T(n,s;a_1,a_2,a_3)$ is true.
 \item[(ii)] The value of  the Hilbert function of $Z\cup L$ at $3$ is equal to $w(n,s;a_1,a_2,a_3)$. 
 \item[(iii)] The dimension of $H^0\left(\P(U), \sI_{Z\cup L}(3)\right)$ is equal to 
 \(
 N(n)-w(n,s;a_1,a_2,a_3).
 \)
\end{itemize}
Our approach for proving the truth of statement $T(n, s; a_1, a_2, a_3)$ consists of showing (iii).
\end{remark}

\subsection{The specialization} \label{sec_specialization}
The proof of Theorem \ref{thm_main} proceeds, very roughly speaking, by induction on $n$ via a specialization technique, which follows closely Brambilla and Ottaviani's idea \cite{BO2008} for proving that the secant varieties of the third Veronese variety are (mostly) nondefective. Proposition \ref{prop:induction} demonstrates what needs to be done to show the initial step. The base cases of the induction are listed in Corollary \ref{cor_base_cases} and will be proved with the aid of a computer in Section \ref{sec_base}. The purpose of this subsection is to outline and visualize the main idea of the proof of Proposition \ref{prop:induction}. We give a full proof of this proposition in Section \ref{sec_inductive}, which is based on Remark \ref{rem:hilbert-function-2}. The reader who is familiar with the language of schemes may safely skip to Section \ref{sec_inductive}. 

The basic idea of the aforementioned approach consists of reducing the problem of proving the truth of the statement $T(n,3;s_i(n))$ to the problem of proving $T(n-24,3;s_i(n-24))$, where $i \in \{1,2\}$. For establishing such an inductive approach, we select $s_i(n)$ points~$p_1$, $\ldots$, $p_{s_i(n)}$ of $\T_n$ as follows:
\begin{itemize}
\item[(i)] $96$ generic points on $\T_n \cap  \Ss_j$ for each $j \in \{1,2,3\}$; 
\item[(ii)] $t(n-48)$ generic points on $\T_n \cap \Ss_j \cap \Ss_k$ for each $j ,k \in \{1,2,3\}$ with $j <k$; and 
\item[(iii)] $s_i(n-72)$ points on $\T_n \cap \bigl(\bigcap_{j=1}^3  \Ss_j\bigr)$.
\end{itemize}
As can be easily verified, $s_i(n-24)$ points are placed in $\T_{n-24} = \T_n \cap \Ss_j$ for each $j \in \{1,2,3\}$. The goal is to show that the linear span of the tangent spaces to $\T_n$ at these specific points has the expected dimension for each $n$, i.e., 
\begin{equation}
\label{eq:equality}
\dim \sum_{i=1}^{s_i(n)} \widehat{T}_{p_i} \left(\T_n\right) = w(n,s_i(n);0,0,0),
\end{equation}
under the assumption that $T(n-24,3;s_i(n-24))$ is true. 
Then, by Remark~\ref{rem:semi-continuity}, we could conclude that the statement $T(n,3;s_i(n))$ is true. 

The strategy to prove equality~(\ref{eq:equality}) under the induction hypothesis is to divide the process of specializing the $s_i(n)$ points as indicated in (i)-(iii) into four steps, the first three of which are visualized in Figure~\ref{fig_general_proof_strategy}. In these steps, we have to handle statements $T$ for more general quintuples.

\begin{figure}
\begin{minipage}[b]{0.65\textwidth} \centering
\scalebox{0.85}{
\begin{tikzpicture}[scale=0.9]
\begin{scope}[shift={(0.0,-.45)},rotate=60]
\node at (6.693298e-01,4.106863e+00) {\color{red}\textbullet};
\node at (1.163549e+00,3.174364e+00) {\color{red}\textbullet};
\node at (1.656918e+00,3.648572e+00) {\color{red}\textbullet};
\node at (5.544232e-01,4.591433e+00) {\color{red}\textbullet};
\node at (1.464178e+00,4.026018e+00) {\color{red}\textbullet};
\node at (1.611964e+00,4.231395e+00) {\color{red}\textbullet};
\node at (1.911964e+00,3.831395e+00) {\color{red}\textbullet};
\node at (0.811964e+00,3.631395e+00) {\color{red}\textbullet};
\node at (1.211964e+00,3.431395e+00) {\color{red}\textbullet};
\node at (2.211964e+00,3.131395e+00) {\color{red}\textbullet};
\node at (0.65,2.4) {\color{red} $\star$};
\node at (0.45,1.35) {\color{red} $\star$};
\node at (1.25,1.55) {\color{red} $\star$};
\node at (1.05,1.82) {\color{red} $\star$};
\node at (1.55,2.5) {\color{red} $\star$};
\end{scope}
\begin{scope}[shift={(-0.8,2.16)},rotate=-60]
 \draw[thick,black] (0,0) rectangle (3,5);
\node at (6.693298e-01,4.106863e+00) {\color{blue}\textbullet};
\node at (1.163549e+00,3.174364e+00) {\color{blue}\textbullet};
\node at (1.656918e+00,3.648572e+00) {\color{blue}\textbullet};
\node at (5.544232e-01,4.591433e+00) {\color{blue}\textbullet};
\node at (1.464178e+00,4.026018e+00) {\color{blue}\textbullet};
\node at (1.611964e+00,4.231395e+00) {\color{blue}\textbullet};
\node at (0.611964e+00,3.031395e+00) {\color{blue}\textbullet};
\node at (0.911964e+00,3.431395e+00) {\color{blue}\textbullet};
\node at (1.011964e+00,3.731395e+00) {\color{blue}\textbullet};
\node at (1.111964e+00,4.331395e+00) {\color{blue}\textbullet};
\node at (1.45,2.4) {\color{blue} $\star$};
\node at (1.15,2.4) {\color{blue} $\star$};
\node at (1.65,2.7) {\color{blue} $\star$};
\node at (1.25,2.8) {\color{blue} $\star$};
\node at (1.75,2.5) {\color{blue} $\star$};
\end{scope}
\node at (0.85,1.2) {\color{blue} $\blacklozenge$};
\node at (0.65,0.90) {\color{blue} $\blacklozenge$};
\node at (0.90,0.7) {\color{blue} $\blacklozenge$};
\node at (0.50, 1.8) {\color{blue} $\star$};
\node at (0.91, 1.59) {\color{blue} $\star$};
\node at (0.82, 1.85) {\color{blue} $\star$};
\node at (1.02, 1.82) {\color{blue} $\star$};
\node at (1.42, 1.69) {\color{blue} $\star$};
\node at (-1.294999e+00,43.151443e-01) {\color{violet}\textbullet};
\node at (-1.177258e+00,39.203261e-01) {\color{violet}\textbullet};
\node at (-0.416572e+00,39.266308e-01) {\color{violet}\textbullet};
\node at (-1.323341e+00,36.106281e-01) {\color{violet}\textbullet};
\node at (-0.117399e+00,40.018191e-01) {\color{violet}\textbullet};
\node at (-1.042950e+00,35.370621e-01) {\color{violet}\textbullet};
\node at (-0.842950e+00,42.370621e-01) {\color{violet}\textbullet};
\node at (-1.442950e+00,36.370621e-01) {\color{violet}\textbullet};
\node at (-0.142950e+00,44.370621e-01) {\color{violet}\textbullet};
\node at (-1.942950e+00,36.870621e-01) {\color{violet}\textbullet}; 
\node at (-1.5,2.6) {$t(n)$};
\node at (4,2.3) {\color{black} $s_i(n-24)$};
\node at (4.70,1.35) {\color{black} $\Ss_1$};
\end{tikzpicture}
}
\subcaption{$T(n, t(n); s_i(n-24), 0, 0)$ and ${\color{black} T(n-24, s_i(n-24); 0, 0, 0)}$}
\label{fig_step2}
\end{minipage}\vspace{1em}

\begin{minipage}[b]{0.9\textwidth} \centering
\scalebox{0.85}{
\begin{tikzpicture}[scale=0.9]
\begin{scope}[shift={(0.0,-.45)},rotate=60]
 \draw[thick,black] (0,0) rectangle (3,5);
\node at (6.693298e-01,4.106863e+00) {\color{red}\textbullet};
\node at (1.163549e+00,3.174364e+00) {\color{red}\textbullet};
\node at (1.656918e+00,3.648572e+00) {\color{red}\textbullet};
\node at (5.544232e-01,4.591433e+00) {\color{red}\textbullet};
\node at (1.464178e+00,4.026018e+00) {\color{red}\textbullet};
\node at (1.611964e+00,4.231395e+00) {\color{red}\textbullet};
\node at (1.911964e+00,3.831395e+00) {\color{red}\textbullet};
\node at (0.811964e+00,3.631395e+00) {\color{red}\textbullet};
\node at (1.211964e+00,3.431395e+00) {\color{red}\textbullet};
\node at (2.211964e+00,3.131395e+00) {\color{red}\textbullet};
\node at (0.65,2.4) {\color{red} $\star$};
\node at (0.45,1.35) {\color{red} $\star$};
\node at (1.25,1.55) {\color{red} $\star$};
\node at (1.05,1.82) {\color{red} $\star$};
\node at (1.55,2.5) {\color{red} $\star$};
\end{scope}
\begin{scope}[shift={(-0.8,2.16)},rotate=-60]
 \draw[thick,black] (0,0) rectangle (3,5);
\node at (6.693298e-01,4.106863e+00) {\color{blue}\textbullet};
\node at (1.163549e+00,3.174364e+00) {\color{blue}\textbullet};
\node at (1.656918e+00,3.648572e+00) {\color{blue}\textbullet};
\node at (5.544232e-01,4.591433e+00) {\color{blue}\textbullet};
\node at (1.464178e+00,4.026018e+00) {\color{blue}\textbullet};
\node at (1.611964e+00,4.231395e+00) {\color{blue}\textbullet};
\node at (0.611964e+00,3.031395e+00) {\color{blue}\textbullet};
\node at (0.911964e+00,3.431395e+00) {\color{blue}\textbullet};
\node at (1.011964e+00,3.731395e+00) {\color{blue}\textbullet};
\node at (1.111964e+00,4.331395e+00) {\color{blue}\textbullet};
\node at (1.45,2.4) {\color{blue} $\star$};
\node at (1.15,2.4) {\color{blue} $\star$};
\node at (1.65,2.7) {\color{blue} $\star$};
\node at (1.25,2.8) {\color{blue} $\star$};
\node at (1.75,2.5) {\color{blue} $\star$};
\end{scope}
\node at (0.35,1.2) {\color{blue} $\blacklozenge$};
\node at (0.15,0.90) {\color{blue} $\blacklozenge$};
\node at (0.39,0.7) {\color{blue} $\blacklozenge$};
\node at (0.00, 1.8) {\color{blue} $\star$};
\node at (0.41, 1.59) {\color{blue} $\star$};
\node at (0.32, 1.85) {\color{blue} $\star$};
\node at (0.52, 1.82) {\color{blue} $\star$};
\node at (0.92, 1.69) {\color{blue} $\star$};
\node at (0.294999e+00,43.151443e-01) {\color{violet}\textbullet};
\node at (0.177258e+00,39.203261e-01) {\color{violet}\textbullet};
\node at (1.416572e+00,39.266308e-01) {\color{violet}\textbullet};
\node at (0.323341e+00,36.106281e-01) {\color{violet}\textbullet};
\node at (1.117399e+00,40.018191e-01) {\color{violet}\textbullet};
\node at (0.042950e+00,35.370621e-01) {\color{violet}\textbullet};
\node at (1.842950e+00,42.370621e-01) {\color{violet}\textbullet};
\node at (0.442950e+00,36.370621e-01) {\color{violet}\textbullet};
\node at (1.142950e+00,44.370621e-01) {\color{violet}\textbullet};
\node at (0.942950e+00,36.870621e-01) {\color{violet}\textbullet}; 
\node at (-0.4,4.4) {$96$};
\node at (4,2.3) {\color{black} $t(n-24)$};
\node at (-2.5,3.5) {\color{black} $t(n-24)$};
\node at (0.35,2.18) {\color{black} $s_i(n-48)$};
\node at (4.70,1.35) {\color{black} $\Ss_1$};
\node at (-4.05,1.35) {\color{black} $\Ss_2$};
\end{tikzpicture}
}
\subcaption{$T(n, 96; t(n-24), t(n-24), 0)$ and ${\color{black}2 \times T(n-24, t(n-24); s_i(n-48), 0, 0)}$}
\label{fig_step3}
\end{minipage}\vspace{1em}

\begin{minipage}{0.95\textwidth}\centering
\scalebox{0.85}{
\begin{tikzpicture}[scale=.9]
\begin{scope}[shift={(0.0,-.45)},rotate=60]
 \draw[thick,black] (0,0) rectangle (3,5);
\node at (6.693298e-01,4.106863e+00) {\color{red}\textbullet};
\node at (1.163549e+00,3.174364e+00) {\color{red}\textbullet};
\node at (1.656918e+00,3.648572e+00) {\color{red}\textbullet};
\node at (5.544232e-01,4.591433e+00) {\color{red}\textbullet};
\node at (1.464178e+00,4.026018e+00) {\color{red}\textbullet};
\node at (1.611964e+00,4.231395e+00) {\color{red}\textbullet};
\node at (1.911964e+00,3.831395e+00) {\color{red}\textbullet};
\node at (0.811964e+00,3.631395e+00) {\color{red}\textbullet};
\node at (1.211964e+00,3.431395e+00) {\color{red}\textbullet};
\node at (2.211964e+00,3.131395e+00) {\color{red}\textbullet};
\end{scope}
\begin{scope}[shift={(-0.8,2.16)},rotate=-60]
 \draw[thick,black] (0,0) rectangle (3,5);
\node at (6.693298e-01,4.106863e+00) {\color{blue}\textbullet};
\node at (1.163549e+00,3.174364e+00) {\color{blue}\textbullet};
\node at (1.656918e+00,3.648572e+00) {\color{blue}\textbullet};
\node at (5.544232e-01,4.591433e+00) {\color{blue}\textbullet};
\node at (1.464178e+00,4.026018e+00) {\color{blue}\textbullet};
\node at (1.611964e+00,4.231395e+00) {\color{blue}\textbullet};
\node at (0.611964e+00,3.031395e+00) {\color{blue}\textbullet};
\node at (0.911964e+00,3.431395e+00) {\color{blue}\textbullet};
\node at (1.011964e+00,3.731395e+00) {\color{blue}\textbullet};
\node at (1.111964e+00,4.331395e+00) {\color{blue}\textbullet};
\end{scope}
\begin{scope}[shift={(-2.25,-1.1)}]
 \draw[thick,black] (0,0) rectangle (5,3);
\node at (4.294999e+00,3.151443e-01) {\color{violet}\textbullet};
\node at (4.177258e+00,1.203261e-01) {\color{violet}\textbullet};
\node at (4.616572e+00,4.266308e-01) {\color{violet}\textbullet};
\node at (4.323341e+00,6.106281e-01) {\color{violet}\textbullet};
\node at (4.817399e+00,4.018191e-01) {\color{violet}\textbullet};
\node at (4.042950e+00,5.370621e-01) {\color{violet}\textbullet};
\node at (3.842950e+00,2.370621e-01) {\color{violet}\textbullet};
\node at (3.442950e+00,6.370621e-01) {\color{violet}\textbullet};
\node at (3.642950e+00,4.370621e-01) {\color{violet}\textbullet};
\node at (3.942950e+00,6.870621e-01) {\color{violet}\textbullet};
\end{scope}
\node at (0.12, 2.5) {\color{blue} $\star$};
\node at (0.41, 2.39) {\color{blue} $\star$};
\node at (0.32, 2.55) {\color{blue} $\star$};
\node at (0.52, 2.52) {\color{blue} $\star$};
\node at (0.72, 2.39) {\color{blue} $\star$};
\node at (1.45,1.4) {\color{blue} $\star$};
\node at (1.15,0.4) {\color{blue} $\star$};
\node at (1.65,0.7) {\color{blue} $\star$};
\node at (1.25,0.8) {\color{blue} $\star$};
\node at (1.75,1.5) {\color{blue} $\star$};
\node at (-0.65,1.4) {\color{red} $\star$};
\node at (-0.15,0.35) {\color{red} $\star$};
\node at (-1.25,0.55) {\color{red} $\star$};
\node at (-1.05,0.82) {\color{red} $\star$};
\node at (-1.55,1.5) {\color{red} $\star$};
\node at (0.35,1.2) {\color{blue} $\blacklozenge$};
\node at (0.15,0.90) {\color{blue} $\blacklozenge$};
\node at (0.39,0.7) {\color{blue} $\blacklozenge$};
\node at (4,2.3) {\color{black} $96$};
\node at (-2.5,3.5) {\color{black} $96$};
\node at (2.4,-0.1) {\color{black} $96$};
\node at (0.35,1.65) {\color{black} $s_i(n-72)$};
\node at (0.35,2.08) {\color{black} $t(n-48)$};
\node at (1.99,1.1) {\color{black} $t(n-48)$};
\node at (-1.40,1.1) {\color{black} $t(n-48)$};
\node at (4.70,1.35) {\color{black} $\Ss_1$};
\node at (-4.05,1.35) {\color{black} $\Ss_2$};
\node at (3.20,-0.95) {\color{black} $\Ss_3$};
\end{tikzpicture}
}
\subcaption{$T(n, 0; 96, 96, 96)$ and ${\color{black} 3 \times T(n-24, 96; t(n-48), t(n-48), 0)}$}
\label{fig_step4}
\end{minipage}
\caption{A visualization of the first three steps in the inductive strategy. The blue points are $p_1, p_2, \ldots, p_{s_i(n-24)}$, the red points are $p_{s_i(n-24)+1}, \ldots, p_{s_i(n-24)+t(n-24)}$, and the collection of violet points contains $p_{s_i(n-24)+t(n-24)+1}, \ldots, p_{s_i(n)}$.} \label{fig_general_proof_strategy}
\end{figure}
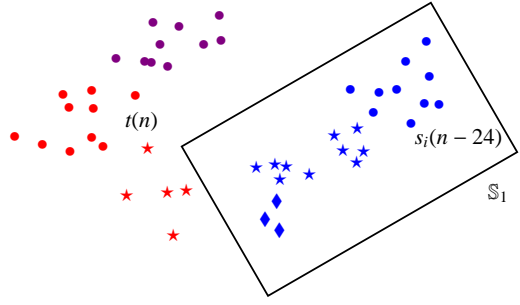
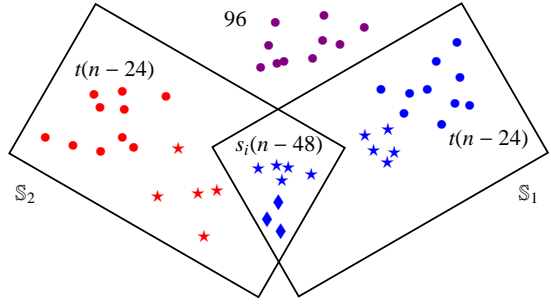
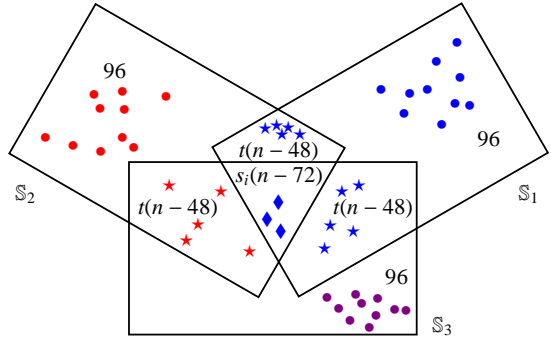

\paragraph{\textbf{Step 1}} 
Let $n \ge 32$. Then, we may specialize $s_i(n-24)$ out of $s_i(n)$ points, namely, the ``blue'' points $p_1$, $\ldots$, $p_{s_i(n-24)}$, to $\T_{n-24} = \T_n \cap \Ss_1$, leaving the remaining $t(n) = s_i(n) - s_i(n-24)$ points on $\T_{n} \setminus \Ss_1$ as illustrated in Figure~\ref{fig_general_proof_strategy}~(A). 

Let $A(n)$ be the vector space spanned by the hyperplanes of $\Ss$ containing the projectivization of $W(n,s_i(n);0,0,0)$, or, equivalently, the linear span of $T_{p_1}(\T_n)$, \dots, $T_{p_{s_i(n)}}(\T_n)$. Let $B(n)$ be the subspace of $A(n)$ spanned by the elements of $A(n)$ containing $\Ss_1$. Note that we can think of $B(n)$ as the vector space spanned by the hyperplanes of $\Ss$ containing the projectivization of $W(n,t(n);s_i(n-24),0,0)$. 

If $H_n \in A(n)\setminus B(n)$, then the intersection of $H_n$ with $\Ss_1$ is a hyperplane of $\Ss_1$, which we denote by $H_{n-24}$. By the definition of $A(n)$, the hyperplane $H_{n-24}$ contains the tangent space $T_{p_i}(\T_{n-24}) = T_{p_i}(\T_n) \cap \Ss_1$ to $\T_{n-24}$ at the blue points $p_i$ for each $i \in \{1, \dots, s_i(n-24)\}$. Since $H_{n-24}$ can be viewed as an element of $A(n-24)$, we obtain the following inequality: 
\[
\dim A(n) \leq \dim A(n-24) + \dim B(n). 
\]
This inequality is equivalent to the following inequality: 
\begin{eqnarray*}
\dim W(n,s_i(n);0,0,0) 
& \geq & \dim W(n,t(n);s_i(n-24),0,0) \\ 
& & + \dim W(n-24,s_i(n-24);0,0,0) - N(n-24). 
\end{eqnarray*}

As we shall see in Section~\ref{sec_inductive},  if $T(n,t(n);s_i(n-24),0,0)$ and $T(n-24,s_i(n-24);0,0,0)$ are true, then the above inequality is an equality. In particular, we obtain 
\[
\dim W(n,s_i(n);0,0,0) = w(n,s_i(n);0,0,0),
\]
which proves the truth of $T(n,s_i(n);0,0,0)$. Recall that the goal is proving that $T(n,3;s_i(n))$ is true for every $n \ge 8$ by induction. However, the specialization proposed here may only be applied for $n \ge 32$. Therefore, the initial cases of the induction, i.e., $n \in \{8,9,\ldots, 32\}$ should be proved separately; in Section \ref{sec_base}, a method is proposed for accomplishing this task. This first step reduces the problem of proving the truth of $T(n,3;s_i(n))$ to the problem of proving the truths of $T(n,t(n);s_i(n-24),0,0)$ for every $n \geq 32$.


\paragraph{\textbf{Step 2}} 
Let $n \ge 56$. Then, we may perform the following two specializations for verifying the truth of $T(n,t(n);s_i(n-24),0,0)$.
First, the $s_i(n-48)$ blue points $p_1, \dots, p_{s_i(n-48)} \in (\T_n \cap \Ss_1)$ are specialized to $(\T_n \cap \Ss_1) \cap \Ss_2$. The remaining $t(n-24)$ points are placed in $(\T_n \cap \Ss_1) \setminus \Ss_2$. Second, we specialize the $t(n-24)$ ``red'' points $p_{s_i(n-24)+1}, \dots, p_{s_i(n-24)+t(n-24)}$ to $(\T_n \cap \Ss_2) \setminus \Ss_1$. The remaining $96$ ``violet'' points are placed in $\T_n \setminus (\Ss_1 \cup \Ss_2)$. The resulting configuration of $s_i(n)$ points is illustrated in Figure~\ref{fig_general_proof_strategy}~(B). 

Let $C(n)$ be the subspace of $B(n)$ spanned by the hyperplane containing $\Ss_2$. One can verify that the tangent space to $\T_n$ at $p_j$ for each $j \in \{1, \dots, s_i(n-48)\}$ lies in the linear span of $\Ss_1$ and $\Ss_2$. In particular, 
\[
S_3(U_1) + S_3(U_2) + \sum_{j=1}^{s_i(n)} \widehat{T}_{p_j}(\T_n) = S_3(U_1) + S_3(U_2) + \sum_{j=s_i(n-48)+1}^{s_i(n)} \widehat{T}_{p_j}(\T_n).
\]
Therefore, $C(n)$ can be identified with the vector space of the hyperplanes in $\Ss$ containing the projectivization of $W(n,96;t(n-24),t(n-24),0)$. 

Let $H_n \in B(n) \setminus C(n)$. Then, the intersection of $H_n$ and $\Ss_2$ is a hyperplane in $\Ss_2$, which we denote by $H_{n-24}$. This hyperplane contains the following linear subspaces:
\begin{itemize}
 \item[--] the tangent spaces to the tangential variety $\T_{n-24} = \T_n \cap \Ss_2$ at the $t(n-24)$ red points $p_{s_i(n-24)+1}, \ldots, p_{s_i(n-24)+t(n-24)}$ that lie in $(\T_n \cap \Ss_2) \setminus \Ss_1$; 
 \item[--] the tangent spaces to $\T_{n-24}$ at the $s_i(n-48)$ blue points $p_1, \dots, p_{s_i(n-48)}$ in $(\T_n \cap \Ss_2) \cap \Ss_1$; and
 \item[--] the $24$-codimensional linear subspace $\Ss_1 \cap \Ss_2$ of $\Ss_2$.
\end{itemize}
Therefore, $H_{n-24}$ can be considered as an element of $B(n-24)$. This implies that 
\[
\dim B(n) \leq \dim B(n-24) +  \dim C(n),  
\]
or, equivalently, 
\begin{eqnarray*}
\dim W(n,t(n);s_i(n-24),0,0) 
& \geq & \dim W(n-24,t(n-24);s_i(n-48),0,0)  \\
& &  + \dim W(n,96;t(n-24),t(n-24),0) - N(n-24).
\end{eqnarray*}

In Section~\ref{sec_inductive}, we will show that the equality of the above inequality holds if $T(n,96;t(n-24),t(n-24),0)$ and $T(n-24,t(n-24);s_i(n-48),0,0)$ are true; consequently, $T(n,t(n);s_i(n-24),0,0)$ is true. The truth of the statement $T(n-24,t(n-24);s_i(n-48),0,0)$ for $n\in \{32, \dots, 55\}$, the base cases of the induction, will be proved in Section~\ref{sec_base}. 

\paragraph{\textbf{Step 3}}
Let $n \ge 80$. Then, we may perform the following specializations for proving the truth of the statement $T(n,96;t(n-24),t(n-24),0)$.
\begin{itemize} 
 \item[(i)] Specialize the $s_i(n-72)$ blue ``diamond'' points $p_1, \dots, p_{s_i(n-72)} \in (\T_n \cap \Ss_1 \cap \Ss_2)$ to $(\T_n \cap \Ss_1 \cap \Ss_2) \cap \Ss_3$. Leave the remaining $t(n-48)$ blue ``star'' points in $(\T_n \cap \Ss_1 \cap \Ss_2) \setminus \Ss_3$. 
 \item[(ii)] Specialize the $t(n-48)$ blue star points $p_{s_i(n-48)+1}, \dots, p_{s_i(n-48)+t(n-48)} \in (\T_n \cap \Ss_1)\setminus\Ss_2$ to $(\T_n \cap \Ss_1 \cap \Ss_3)\setminus\Ss_2$, leaving the remaining $96$ blue points in $(\T_n \cap \Ss_1) \setminus (\Ss_2 \cup \Ss_3)$.
\item[(iii)] Specialize the $t(n-48)$ red star points $p_{s_i(n-24)+1}, \dots, p_{s_i(n-24)+t(n-48)} \in (\T_n \cap \Ss_2)\setminus\Ss_1$ to $(\T_n\cap\Ss_2\cap\Ss_3)\setminus\Ss_1$. Leave the remaining $96$ red points in $(\T_n\cap\Ss_2) \setminus(\Ss_1\cup\Ss_3)$.
\item[(iv)] Specialize the $96$ violet points $p_{s_i(n-24)+t(n-24)+1}, \dots, p_{s_i(n)}$ to $(\T_n \cap \Ss_3)\setminus(\Ss_1\cup\Ss_2)$. 
\end{itemize}
The resulting configuration of $s_i(n)$ points now looks like Figure~\ref{fig_general_proof_strategy}~(C). 

Let $D(n)$ be the subspace of $C(n)$ that contains $\Ss_3$. As before, one can verify that the tangent spaces to~$\T_n$ at the star and diamond points $p_j$ for every $j$ in the union of $\{1, \dots, s_i(n-48)\}$, $\{s_i(n-48)+1, \dots, s_i(n-48)+t(n-48)\}$, and $\{s_i(n-24)+1, \dots, s_i(n-24)+t(n-48)\}$ are contained in the linear span of $\Ss_1$,  $\Ss_2$, and $\Ss_3$. Consequently, 
\[
\sum_{j=1}^3 S_3(U_j) + \sum_{j=1}^{s_i(n)} \widehat{T}_{p_j}(\T_n) = \sum_{j=1}^3 S_3(U_j) + \sum_{j \in \varLambda } \widehat{T}_{p_j}(\T_n),
\]
where $\varLambda$ is the union of $\{s_i(n-48)+t(n-48)+1, \dots,s_i(n-24)\}$,  $\{s_i(n-24)+t(n-48)+1, \dots,s_i(n-24)+t(n-24)\}$, and $\{s_i(n-24)+t(n-24)+1, \dots, s_i(n)\}$. Note that the points $p_j$ with $j \in \varLambda$ correspond to the ``regular'' circular points in Figure \ref{fig_general_proof_strategy}, i.e., the $96$ blue regular points, the $96$ red regular points, and the $96$ violet regular points. This means that $D(n)$ can be viewed as the vector space of hyperplanes of $\Ss$ that contain the projectivization of $W(n,0;96,96,96)$. 

Let $H_n \in C(n) \setminus D(n)$. Consider the hyperplane $H_{n-24}$ that is obtained as the intersection of $H_n$ and $\Ss_3$. This hyperplane contains the following linear subspaces: 
\begin{itemize}
 \item[--] the tangent spaces to the tangential variety $\T_{n-24} = \T_n \cap \Ss_3$ at the $96$ violet points $p_{s_i(n-24)+t(n-24)+1}, \dots, p_{s_i(n)} \in (\T_n\cap\Ss_3) \setminus (\Ss_1 \cup \Ss_2)$;
  \item[--] the tangent spaces to $\T_{n-24}$ at the $t(n-48)$ blue star points $p_{s_i(n-48)+1}$, $p_{s_i(n-48)+2}$, $\ldots$, $p_{s_i(n-48)+t(n-48)} \in (\T_n \cap \Ss_1 \cap \Ss_2) \setminus \Ss_3$;
  \item[--] the tangent spaces to $\T_{n-24}$ at the $t(n-48)$ red star points $p_{s_i(n-24)+1}$, $p_{s_i(n-24)+2}$, $\ldots$, $p_{s_i(n-24)+t(n-48)} \in (\T_n \cap \Ss_2 \cap \Ss_3)\setminus\Ss_1$; and 
  \item[--] the $24$-codimensional linear subspaces $\Ss_1 \cap \Ss_3$ and $\Ss_2 \cap \Ss_3$ of $\Ss_3$. 
\end{itemize}    
Therefore, $H_{n-24}$ can be considered as an element of $C(n-24)$, and thus we obtain the inequality
\[
\dim C(n) \leq  \dim C(n-24) + \dim D(n). 
\] 
This inequality is equivalent to the following inequality: 
\begin{eqnarray*}
 \dim W(n,96;t(n-24),t(n-24),0) & \geq & \dim W(n-24,96;t(n-48),t(n-48),0) \\
 && + \dim W (n,0;96,96,96) - N(n-24). 
\end{eqnarray*}
In Section~\ref{sec_inductive}, we will show that if $T(n,0;96,96,96)$ and $T(n-24,96;t(n-48),t(n-48), 0)$ are true, then the above inequality is an equality, and, hence, $\dim  W(n,96;t(n-24),t(n-24),0) = w(n,96;t(n-24),t(n-24),0)$.  
This establishes the truth of $T(n,96;t(n-24),t(n-24),0)$. 

The truths of the statements $T(n,96;t(n-24),t(n-24),0)$ for each $n \in \{56, \dots, 79\}$ will be shown in Section~\ref{sec_base}. Therefore, it remains only to show that $T(n,0;96,96,96)$ is true for every $n \geq 80$. 

\paragraph{\textbf{Step 4}}
Proving the truths of $T(n,0;96,96,96)$ for $n \ge 80$ can be reduced to showing the truth of $T(71,0;96,96,96)$ as follows. Let $W$ be a $72$-dimensional subspace of $U$ that intersects $U_1, U_2, U_3$, $U_1\cap U_2, U_1 \cap U_3, U_2 \cap U_3$, and $U_1 \cap U_2 \cap U_3$ properly. More precisely, $W$ satisfies the following conditions: 
\begin{itemize}
\item[--] $W\cap U_j$ is of codimension $24$ for each $j \in \{1,2,3\}$. 
\item[--] $W\cap U_j \cap U_k$ is of codimension $48$ for each $j, k\in \{1,2,3\}$. 
\item[--] $W\cap \bigcap_{j=1}^3 U_j$ is of codimension $72$.  
\end{itemize} 
As was discussed in Step 3, $T(n,0;96,96,96)$ is true if and only if the vector space 
\[
\sum_{j=1}^3 S_3(U_j) + \sum_{j \in \varLambda} T_{p_j}(\T_n)
\]
has dimension $w(n,0;96,96,96)$. We therefore no longer consider the star and diamond points, i.e., the $p_j$'s with $j \in \{1, \dots, s_i(n)\} \setminus \varLambda$. 
Instead, we only specialize
\begin{itemize}
\item[--] the $96$ regular blue points of $(\T_n \cap \Ss_1) \setminus (\Ss_2 \cup \Ss_3)$ to $\T_n \cap \P(S_3(W \cap U_1))$;
\item[--] the $96$ regular red points of $(\T_n \cap \Ss_2)\setminus(\Ss_1\cup\Ss_3)$ to $\T_n \cap \P(S_3(W \cap U_2))$; and
\item[--] the $96$ regular violet points of $(\T_n \cap \Ss_3)\setminus(\Ss_1\cup\Ss_2)$ to $\T_n \cap \P(S_3(W \cap U_3))$.
\end{itemize}

Let $E(n)$ be the subspace of $D(n)$ spanned by the hyperplanes containing $\P(S_3(W))$, and let $H_n \in D(n) \setminus E(n)$. Then, $H_n \cap \P(S_3(W))$ can be thought of as an element of $D(71)$ in the same way as in Steps 1, 2, and 3. We thus have the inequality 
\[
\dim D(n) \leq \dim D(71) + \dim E(n). 
\]
As shall be shown in Section~\ref{sec_inductive}, there is no hyperplane containing the linear space spanned by $\P(S_3(W))$, $\Ss_1$, $\Ss_2$, and $\Ss_3$, from which it follows that $\dim D(n) \leq \dim D(71)$. As is stated in Lemma~\ref{lem:equiabundant}, the vector space $D(n)$ is expected to have dimension~$0$ for every $n \ge 71$, or equivalently, the vector space $W(n,0;96,96,96)$ has dimension $N(n)$. Therefore, in order to prove the truths of $T(n,0;96,96,96)$ for every $n \geq 71$, it suffices to show that $T(71,0;96,96,96)$ is true.

Below we summarize Steps 1--4. 

\begin{proposition}
\label{prop:induction}
Let $i \in \{1,2\}$ and let $n \in \N$. 
\begin{itemize} 
 \item[(i)] Let $n \ge 32$. 
 If $T(n,t(n);s_i(n-24),0,0)$ and $T(n-24,s_i(n-24);0,0,0)$ are true, then so is~$T(n,s_i(n);0,0,0)$. 
 \item[(ii)] Let $n \ge 56$. If $T(n,96;t(n-24),t(n-24),0)$ and $T(n-24,t(n-24);s_i(n-48),0,0)$ are true, then so is~$T(n,t(n);s_i(n-24),0,0)$. 
 \item[(iii)] Let $n \ge 80$. If $T(n,0;96,96,96)$ and $T(n-24,96;t(n-48),t(n-48),0)$ are true, then so is~$T(n,96;t(n-24),t(n-24),0)$. 
 \item[(iv)] Let $n \ge 71$. If $T(71,0;96,96,96)$ is true, then so is $T(n,0;96,96,96)$. 
\end{itemize}
\end{proposition}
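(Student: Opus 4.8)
The plan is to prove all four parts by one and the same argument, which makes rigorous the informal discussion of Steps~1--4 and Figure~\ref{fig_general_proof_strategy} through the dictionary of Remark~\ref{rem:hilbert-function-2}: a statement $T(n,s;a_1,a_2,a_3)$ holds precisely when $\dim H^0\bigl(\P(U),\sI_{Z\cup L}(3)\bigr)=N(n)-w(n,s;a_1,a_2,a_3)$ for the associated scheme $Z\cup L\subseteq\P(U)$. By Remark~\ref{rem:semi-continuity} it is enough to produce \emph{one} configuration realizing this, so in each part we may specialize the points exactly as prescribed in the corresponding step. The single tool is a restriction (Castelnuovo) argument with respect to the relevant linear section of $\P(U)$: the codimension-$24$ subspace $L_j=\P(U_j)$ in parts~(i),~(ii),~(iii) (for $j=1,2,3$ respectively), and the $71$-dimensional subspace $\P(W)$ with $W=\mathrm{Span}(B_1\cup B_2\cup B_3)$ in part~(iv).

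Write $Y$ for the scheme ($(2,3)$-points together with whichever sections $L_k$ are already present) attached to the statement we are proving. Restricting cubics to $L_j$ gives the exact sequence
\[
0\longrightarrow H^0\bigl(\P(U),\sI_{Y\cup L_j}(3)\bigr)\longrightarrow H^0\bigl(\P(U),\sI_{Y}(3)\bigr)\longrightarrow H^0\bigl(L_j,\sI_{Y\cap L_j}(3)\bigr),
\]
hence $\dim H^0(\sI_Y(3))\le\dim H^0(\sI_{Y\cup L_j}(3))+\dim H^0(L_j,\sI_{Y\cap L_j}(3))$. The heart of the matter is to recognize the two terms on the right. The kernel $H^0(\sI_{Y\cup L_j}(3))$ is the space of cubics through $Y$ and through $L_j$; since a cubic contains $L_j$ if and only if the associated hyperplane of $\Ss$ contains $\Ss_j$, this equals $H^0(\sI_{Y'}(3))$ for the scheme $Y'$ of the \emph{companion} statement --- $T(n,t(n);s_i(n-24),0,0)$, $T(n,96;t(n-24),t(n-24),0)$, $T(n,0;96,96,96)$ in parts~(i),~(ii),~(iii) respectively. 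Checking $Y\cup L_j=Y'$ amounts to noting that every cubic through $L_j$ and the already-present sections $L_k$ automatically passes through any $(2,3)$-point at a point of $\T_n\cap\Ss_j\cap\Ss_k$ (equivalently, $\widehat T_p(\T_n)\subseteq S_3(U_j)+S_3(U_k)$ there, which follows from~(\ref{eq:tangentSpace}) and the decompositions $U=(U_j\cap U_k)\oplus\overline U_j\oplus\overline U_k$), so those points drop out, while adjoining $L_j$ reinterprets the remaining $(2,3)$-points on $\Ss_j$ as the auxiliary points of the larger statement. The third term is controlled by the $(n-24)$-dimensional statement of the step: the trace on $L_j\cong\P^{n-24}$ of a $(2,3)$-point at a point of $\T_n\cap\Ss_j$ is the $(2,3)$-point of that point inside $\T_{n-24}=\T_n\cap\Ss_j$, the trace of an active section $L_k$ with $k\neq j$ is the codimension-$24$ section $L_k\cap L_j=\P(U_j\cap U_k)$ of $L_j$, and traces of $(2,3)$-points supported off $L_j$ are empty, so that $Y\cap L_j$ is exactly the scheme of, e.g., $T(n-24,s_i(n-24);0,0,0)$ in part~(i). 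Translating the displayed inequality through $\dim H^0(\sI_{(\cdot)}(3))=N(\cdot)-\dim W(\cdot)$ reproduces the estimate of the corresponding step, e.g.\ in part~(i)
\[
\dim W(n,s_i(n);0,0,0)\ \ge\ \dim W(n,t(n);s_i(n-24),0,0)+\dim W(n-24,s_i(n-24);0,0,0)-N(n-24).
\]

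To conclude, feed in the hypotheses: the companion and the $(n-24)$-dimensional statement are true, so by Remark~\ref{rem:semi-continuity} the two $\dim W(\cdot)$ on the right equal their expected values $w(\cdot)$ (the specialized points being generic in their strata), while Proposition~\ref{prop_statement} bounds the left-hand side by $w(n,s_i(n);0,0,0)$. It then suffices to verify the purely numerical inequality, e.g.\ for part~(i),
\[
w(n,t(n);s_i(n-24),0,0)+w(n-24,s_i(n-24);0,0,0)-N(n-24)\ \ge\ w(n,s_i(n);0,0,0),
\]
and its analogues in parts~(ii),~(iii); combined with the two bounds this forces equality, i.e.\ the target statement. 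Using the identities $t(n)=s_i(n)-s_i(n-24)$, $t(n-24)=s_i(n-24)-s_i(n-48)$, $t(n-48)=s_i(n-48)-s_i(n-72)$ and Lemma~\ref{lem_basis}, the left-hand side collapses to $(2n+1)s_1(n)$ in the subabundant regime $i=1$; in the superabundant regime $i=2$ one uses Proposition~\ref{eq:s} to see that the remainder of $N(n)$ upon division by $2n+1$ increases by exactly $g(r):=(4r^2+22r+33)\bmod 48\le 47$ when $n$ is decreased by $24$, so that the $48$ extra conditions gained by replacing $s_1(n-24)$ with $s_2(n-24)=s_1(n-24)+1$ push the count to $N(n)$, again yielding equality.

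Part~(iv) is the same argument with $L_j$ replaced by $\P(W)$: here the kernel term $H^0\bigl(\P(U),\sI_{Y\cup\P(W)}(3)\bigr)$ is the space of hyperplanes of $\Ss$ containing $\P(S_3(W)),\Ss_1,\Ss_2,\Ss_3$, and it vanishes because $S_3(W)+S_3(U_1)+S_3(U_2)+S_3(U_3)=S_3(U)$ --- a one-line monomial count, since a degree-$3$ monomial either repeats an index, or avoids all of some $B_j$, or has exactly one index in each of $B_1,B_2,B_3$ and therefore lies in $S_3(W)$. Hence $\dim W(n,0;96,96,96)\ge\dim W(71,0;96,96,96)=N(71)$ by Lemma~\ref{lem:equiabundant}, forcing $\dim W(n,0;96,96,96)=N(n)$. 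The step I expect to be the main obstacle is precisely the numerical reconciliation of the $w$-values across the four specializations: one must keep careful track of whether each intermediate statement $T(n,t(n);\dots)$, $T(n,96;\dots)$, $T(n,0;96,96,96)$ is sub-, super-, or equiabundant as $n$ and $n\bmod 24$ vary --- since the minimum defining $w$ switches branch accordingly --- and this is exactly what the closed forms for $s_i(n)$, $t(n)$, $N(n)$ together with Proposition~\ref{eq:s} and Lemma~\ref{lem:equiabundant} are designed to supply. By comparison, the trace identifications of the second paragraph are routine, following from~(\ref{eq:tangentSpace}) and the coordinate structure of the subspaces $U_j$.
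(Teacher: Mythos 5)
Your proposal is correct and takes essentially the same route as the paper's proof in Section~\ref{sec_inductive}: restriction to the linear sections $L_j$ (or $\P(W)$ in part~(iv)), the resulting short exact sequence of twisted ideal sheaves, identification of the kernel and trace with the companion and lower-dimensional schemes via Remark~\ref{rem:hilbert-function-2}, and the numerical reconciliation supplied by Proposition~\ref{eq:s}, Lemma~\ref{lem:equiabundant}, Lemma~\ref{lem_codim}, and Lemma~\ref{lem:last-step}. You are a bit more explicit than the paper about why the $(2,3)$-points supported on $\T_n\cap\Ss_j\cap\Ss_k$ are absorbed into $L_j\cup L_k$ (the containment $\widehat T_p(\T_n)\subseteq S_3(U_j)+S_3(U_k)$), a step the paper establishes in the informal discussion of Section~\ref{sec_specialization} rather than in the formal proof, but the substance is the same.
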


In order to finish the proof of Theorem~\ref{thm_main}, we need to verify the base cases, which are listed in the following corollary.
\begin{corollary} \label{cor_base_cases}
Let $i \in \{1,2\}$. If the following statements are true, then so is $T(n,3;s_i(n))$ for every $n \geq 8$: 
\begin{itemize}
 \item[(i)] $T(71,0;96,96,96)$; 
 \item[(ii)] $T(n,96;t(n-24),t(n-24),0)$ for every $n \in \{56, \ldots, 79\}$; 
 \item[(iii)] $T(n,t(n);s_i(n-24),0,0)$ for every $n \in \{32, \ldots, 55\}$; and
 \item[(iv)] $T(n,s_i(n);0,0,0)$ for every $n \in \{8, \ldots, 31\}$.
\end{itemize}
\end{corollary}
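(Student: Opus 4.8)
The plan is to prove the corollary by a single strong induction on $n$, using Proposition~\ref{prop:induction} to descend in steps of $24$ and taking the four families of hypotheses (i)--(iv) as the base of the induction. What makes the scheme close up is that parts (i)--(iii) of Proposition~\ref{prop:induction} each reduce a statement in ambient dimension $n$ to a statement of the \emph{same shape} in dimension $n-24$, together with one auxiliary statement that is either one of hypotheses (i)--(iii) for small $n$ or lies one level ``lower'' in a short four-type hierarchy described below; moreover these reductions are invoked only when $n\ge 32$ (in fact $n\ge 56$ for part (ii) and $n\ge 80$ for part (iii)), so that $n-24\ge 8$ and the induction on $n$ with base range $\{8,\dots,31\}$ is well-founded. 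Part (iv) of Proposition~\ref{prop:induction} requires no induction at all: combined with hypothesis (i) it yields $T(n,0;96,96,96)$ for every $n\ge 71$ outright.

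Concretely, I would track three statements simultaneously through the induction: $(a)$ the target $T(n,s_i(n);0,0,0)$ for $n\ge 8$; $(b)$ $T(n,t(n);s_i(n-24),0,0)$ for $n\ge 32$; and $(c)$ $T(n,96;t(n-24),t(n-24),0)$ for $n\ge 56$. Together with $(d)$ the family $T(n,0;96,96,96)$ for $n\ge 71$, already disposed of above, these form the ``four types''. Fix $n$, assume $(a)$, $(b)$, $(c)$ for all $m$ with $8\le m<n$, and establish $(c)$, then $(b)$, then $(a)$ at $n$, in that order, since each uses only statements of its own type or of a lower one. For $(c)$: if $56\le n\le 79$ it is hypothesis (ii); if $n\ge 80$, apply Proposition~\ref{prop:induction}(iii) to $T(n,0;96,96,96)$ (true by $(d)$, as $n\ge 71$) and to $T(n-24,96;t(n-48),t(n-48),0)$, which is $(c)$ at $n-24\ge 56$ and hence holds by the induction hypothesis. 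For $(b)$: if $32\le n\le 55$ it is hypothesis (iii); if $n\ge 56$, apply Proposition~\ref{prop:induction}(ii) to $(c)$ just established at $n$ and to $T(n-24,t(n-24);s_i(n-48),0,0)$, which is $(b)$ at $n-24\ge 32$. For $(a)$: if $8\le n\le 31$ it is hypothesis (iv); if $n\ge 32$, apply Proposition~\ref{prop:induction}(i) to $(b)$ just established at $n$ and to $T(n-24,s_i(n-24);0,0,0)$, which is $(a)$ at $n-24\ge 8$.

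This completes the induction. Since $T(n,s_i(n);0,0,0)$ is by definition the same statement as $T(n,3;s_i(n))$, part $(a)$ for all $n\ge 8$ is exactly the conclusion of the corollary. The only point that genuinely requires care is bookkeeping rather than mathematics: one must check that the base ranges $\{8,\dots,31\}$, $\{32,\dots,55\}$, $\{56,\dots,79\}$ and the isolated value $71$ mesh precisely with the thresholds $n\ge 32$, $n\ge 56$, $n\ge 80$, $n\ge 71$ of Proposition~\ref{prop:induction}, so that no case is skipped and no reduction ever produces an $n$ below $8$. All the mathematical substance is already contained in Proposition~\ref{prop:induction} and in the separate, computer-assisted verification of the base cases; the present corollary is merely the combinatorial assembly.
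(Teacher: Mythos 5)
Your proposal is correct and matches the paper's intent exactly: the paper does not give a separate proof of the corollary, treating it as an immediate assembly of Proposition~\ref{prop:induction}, and your careful strong induction on $n$, tracking the four statement-types $(a)$--$(d)$ in decreasing order of type at each fixed $n$, is precisely the bookkeeping the paper leaves implicit.
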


Combining the above corollary with the results of \cite{BCGI} proves Theorem \ref{thm_main}.

\section{The inductive cases: Proof of Proposition~\ref{prop:induction}} \label{sec_inductive}
The purpose of this section is to prove Proposition~\ref{prop:induction}. We will invoke some auxiliary lemma's to accomplish this task. The first of these states that the proposed specialization has some nice numerical features.
\begin{lemma}
\label{lem:equiabundant}
The statements $T(n, 0; 96, 96, 96)$ and $T(n, 96; t(n-24), t(n-24), 0)$ are equiabundant for $n \ge 71$ and $n \ge 48$ respectively.
\end{lemma}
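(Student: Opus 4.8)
The statement $T(n,s;a_1,a_2,a_3)$ is ``equiabundant'' exactly when the first argument of the minimum in \eqref{eq:dimension} equals $N(n)$, so the plan is simply to verify this numerical identity in the two relevant configurations. I would begin with the case $T(n,0;96,96,96)$, where $s=0$, $a_1=a_2=a_3=96$, hence $k=\chi(96)+\chi(96)+\chi(96)=3$. Plugging into \eqref{eq:dimension}, the first term of the minimum is
\[
 \sum_{j=1}^{3} (-1)^{j-1}\binom{3}{j} N(n-24j) + 48\cdot 288
 = 3N(n-24) - 3N(n-48) + N(n-72) + 13824 .
\]
I would expand $N(m)=\binom{m+3}{3}=\tfrac{1}{6}(m+1)(m+2)(m+3)$ as a cubic polynomial in $m$, substitute $m=n-24,n-48,n-72$, and show after simplification that the right-hand side equals $N(n)$ identically as a polynomial in $n$. (The third finite difference of a cubic is a constant, so $3N(n-24)-3N(n-48)+N(n-72)$ already looks like $N(n)$ minus a constant, and $13824=48\cdot288$ should be precisely that constant; this is the routine computation I would not grind through here.) Once this polynomial identity is established, it follows that the minimum in \eqref{eq:dimension} equals $N(n)$ for all $n$, and the statement is superabundant; subabundance is automatic since the first term also equals $N(n)$. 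One must then check the side condition that the configuration makes sense, namely $\dim U_1\cap U_2\cap U_3 = n-72 \ge$ whatever is needed for the $(2,3)$-points and lines in Remark~\ref{rem:hilbert-function-2} to be defined and distinct; this forces $n\ge 71$ (so that $n-72 \ge -1$, i.e.\ the relevant intersections are at worst empty, or more precisely so that the geometry is the expected one), which matches the stated hypothesis.

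For the second case $T(n,96;t(n-24),t(n-24),0)$ I would proceed identically: here $a_1=a_2=t(n-24)$, $a_3=0$, so $k=2$, and $s=96$. The first term of the minimum in \eqref{eq:dimension} becomes
\[
 2N(n-24) - N(n-48) + 48\cdot 2\,t(n-24) + (2n+1)\cdot 96 .
\]
Using $t(m)=4m-37$ from Section~\ref{sec_sub_sup} (valid since $n-24\ge 32$, i.e.\ $n\ge 56$), this is $2N(n-24)-N(n-48) + 96\,(4n-96-37) + 96(2n+1) = 2N(n-24)-N(n-48)+96(6n-132)$. Again I would expand the binomials into cubics in $n$ and verify that this equals $N(n)$ identically; the second finite difference $2N(n-24)-N(n-48)$ of the cubic $N$ is a quadratic, and the linear correction $96(6n-132)$ together with a constant should reconstitute $N(n)$ exactly. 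This is again a mechanical polynomial check. The hypothesis $n\ge 48$ guarantees $n-48\ge 0$ so that $N(n-48)$ and the subspace $U_1\cap U_2$ of dimension $n-48$ are legitimate; since we additionally need $t(n-24)=4n-133$ to be the correct value we in fact use this only for $n\ge 56$ elsewhere, but the equiabundance identity itself holds for $n\ge 48$.

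The \textbf{only potential obstacle} is bookkeeping: making sure the signs in the inclusion--exclusion sum $\sum_j(-1)^{j-1}\binom{k}{j}N(n-24j)$ are transcribed correctly and that the constant $48\sum a_i$ and the term $(2n+1)s$ are added with the right multiplicities. There is no conceptual difficulty — no geometry, no schemes, just verifying that two explicit polynomials in $n$ (obtained from expanding a handful of binomial coefficients) coincide. I would carry out both expansions, confirm the leading terms match (both have leading term $\tfrac{1}{6}n^3$, as they must since the highest-order finite differences of $N$ telescope correctly by design of the constants $96$, $t(n)$, $24$), and then check the three lower-order coefficients agree; this pins down the identity. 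The proof then consists of displaying these two computations and noting that ``first term of the minimum $=N(n)$'' is by definition the assertion that the statement is equiabundant.
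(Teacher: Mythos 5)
Your proposal is correct and takes essentially the same approach as the paper: both reduce equiabundance to the polynomial identity that the first argument of the minimum in \eqref{eq:dimension} equals $N(n)$, and verify it by expanding the binomial coefficients (for $T(n,0;96,96,96)$ the identity $N(n)=3N(n-24)-3N(n-48)+N(n-72)+3\cdot48\cdot96$, for $T(n,96;t(n-24),t(n-24),0)$ the identity $N(n)-2N(n-24)+N(n-48)=576n-12672=2\cdot48\cdot t(n-24)+96(2n+1)$). The one small point worth making explicit, which you gesture at but do not state, is the convention $N(-1)=0$ used at the boundary case $n=71$, where $N(n-72)=N(-1)=\binom{2}{3}=0$.
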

\begin{proof}
From straightforward computations, it follows that 
\[
N(n) = \sum_{i=1}^3 (-1)^{i+1} \binom{3}{i} N(n-24i) + 3 \cdot 48 \cdot 96, 
\]
so that $T(n,0;96,96,96)$ is equiabundant. Note that we adopt the convention that $N(-1) = 0$ for the special case $n = 71$.

We also have 
\begin{align*}
N(n) - \sum_{i=1}^2 (-1)^{i+1} \binom{2}{i} N(n-24i) &= 576n-12672 \\ 
&=  576(24k+r)-12672 \\
&=  (9216k + 384r - 12768)+(4608k + 192r + 96) \\
&=  2\left(48\left(96k+4r-133\right)\right) + 96\left(2 \left(24k+r\right)+1\right) \\
&=  2\cdot 48\cdot  t(n-24)+96\left(2n + 1\right). 
\end{align*}
Therefore, $T(n,96;t(n-24),t(n-24),0)$ is equiabundant.
\end{proof}

Another nice feature of the specialization is revealed in the next lemma.
\begin{lemma}\label{lem_codim}
Let $n \ge 32$. Then, the expected codimension of $W(n, t(n); s_1(n-24), 0, 0)$ in~$S_3(U)$, denoted $c(n)$, is a function of the remainder of $n$ after division by $24$. In particular,~$c(n+24) = c(n)$.
\end{lemma}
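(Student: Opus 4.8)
The plan is to unwind the definition of the expected codimension and reduce the periodicity claim to a one-line polynomial identity. Applying Proposition~\ref{prop_statement} with $s = t(n)$ and $(a_1,a_2,a_3) = (s_1(n-24),0,0)$ — and noting that $s_1(n-24) > 0$ for every $n \ge 32$, so that $k = 1$ in the notation of that proposition — the expected dimension of $W(n,t(n);s_1(n-24),0,0)$ is
\[
 w(n, t(n); s_1(n-24), 0, 0) = \min\bigl\{\, N(n-24) + 48\,s_1(n-24) + (2n+1)\,t(n),\ N(n)\,\bigr\}.
\]
Hence the expected codimension is $c(n) = \max\{F(n),0\}$, where $F(n) := N(n) - N(n-24) - 48\,s_1(n-24) - (2n+1)\,t(n)$. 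So it suffices to prove $F(n+24) = F(n)$ for all $n \ge 32$: granting this, $c(n+24) = \max\{F(n+24),0\} = \max\{F(n),0\} = c(n)$, and since $\{32,33,\dots,55\}$ meets every residue class modulo $24$, descending by steps of $24$ shows that $c(n)$ depends only on the remainder of $n$ after division by $24$.

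To verify $F(n+24) = F(n)$ I would assemble three elementary facts. First, $t(n) = 4n - 37$ for $n \ge 32$, so $t(n+24) = t(n) + 96$. Second, $s_1(n) - s_1(n-24) = t(n)$, which is immediate from the explicit formula for $s_1$ in Section~\ref{sec_sub_sup} (it is, in effect, the definition of $t$). Third, because $N(m) = \binom{m+3}{3}$ is a cubic in $m$ with leading coefficient $\frac{1}{6}$ and next coefficient $1$, its symmetric second difference with step $24$ is $N(n+24) - 2N(n) + N(n-24) = 6\cdot\frac{1}{6}\cdot 24^2\,n + 2\cdot 24^2 = 576(n+2)$. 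Expanding $F(n+24) - F(n)$, the $N$-terms contribute $576(n+2)$, the $s_1$-terms contribute $-48\bigl(s_1(n)-s_1(n-24)\bigr) = -48\,t(n)$, and the tangent-space-count terms contribute $-\bigl[(2n+49)\,t(n+24) - (2n+1)\,t(n)\bigr] = -48\,t(n) - 96(2n+49)$; using $t(n)+2n+49 = 6(n+2)$, these last two groups sum to $-576(n+2)$, cancelling the $N$-contribution. Hence $F(n+24) = F(n)$.

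The argument is essentially bookkeeping, so I do not anticipate a genuine obstacle. The only point meriting care is that one should work with $F$ rather than with $c$ directly: since $c = \max\{F,0\}$, the periodicity of $c$ follows from that of $F$ with no need to determine for which residues the statement $T(n,t(n);s_1(n-24),0,0)$ is subabundant (equivalently, for which $n$ one has $F(n) > 0$). The single substantive computation is the second-difference identity for $N$, itself a special case of $p(n+h) - 2p(n) + p(n-h) = 6a h^2 n + 2b h^2$ for a cubic $p(n) = an^3 + bn^2 + cn + d$.
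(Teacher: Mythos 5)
Your proof is correct, and it takes a different route from the paper's. The paper plugs in the closed-form expression for $s_1(n-24)$ in terms of the quotient $k$ and remainder $r$ of $n$ after division by $24$, and then grinds out an explicit formula $c(n) = 4r^2 + 22r + 33 - 48\lfloor(4r^2+22r+33)/48\rfloor$, i.e., the remainder of $4r^2 + 22r + 33$ modulo $48$, from which periodicity is immediate. You instead prove periodicity indirectly by showing $F(n+24) = F(n)$ via a second-difference computation, using only the relation $s_1(n) - s_1(n-24) = t(n) = 4n-37$ rather than the full explicit formula for $s_1$, and then pass to $c = \max\{F,0\}$. Your argument is somewhat cleaner in that it never touches the floor-function expression for $s_1$, and it isolates the periodicity mechanism (a second-difference identity for the cubic $N$); the paper's approach is heavier on algebra but produces the explicit value of $c(n)$ as a by-product, which could be useful elsewhere. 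The one point of care you correctly flagged — working with $F$ rather than $c$, so as not to need to know in advance that $T(n,t(n);s_1(n-24),0,0)$ is subabundant — is handled implicitly in the paper, since its final formula is visibly a remainder modulo $48$ and hence nonnegative, validating the assumption that the first branch of the $\min$ is active. Both arguments are sound; yours is a legitimate alternative.
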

\begin{proof}
 If $n \ge 32$, then the expected codimension of $W(n,t(n);s_1(n-24),0,0)$ in $S_3(U)$ is given explicitly by
\begin{align*}
 c(n) 
&= N(n) - w(n, t(n); s_1(n-24),0,0) \\
&= N(n) - N(n-24) - t(n)(2n+1) - 48 s_1(n-24) \\
&= 12 n^2 - 240 n + 1772 - (4n-37)(2n+1) - 48 s_1(n-24) \\
&= 4n^2 - 170 n + 1809 - 48^2 (k-1)^2 - 48(11+4r)(k-1) - 48 \lfloor (4r^2 + 22r +33)/48 \rfloor \\ 
&= 4r^2 -170 n + 1809 + 4608 k - 48^2 + 48(11+4r) -528 k - 48 \lfloor (4r^2 + 22r +33)/48 \rfloor \\ 
&= 4r^2 + 22 r + 33 - 48 \lfloor (4r^2 + 22r +33)/48 \rfloor,
\end{align*}
where $r$ and $k$ are the quotient and remainder by division of $n$ by 24. It follows that $c(n)$ is the remainder of $4r^2 +22r +33$ after division by $48$. 
\end{proof}

The final auxiliary result that we require is stated next.
\begin{lemma}
\label{lem:last-step}
Let $U_K$ be the subspace of $U$ spanned by $B_1 \cup B_2 \cup B_3$. Then, 
\[
S_3(U) = S_3(U_K) + \sum_{i=1}^3 S_3(U_i). 
\]
\end{lemma}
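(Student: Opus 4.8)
The statement to prove is:
\[
S_3(U) = S_3(U_K) + \sum_{i=1}^3 S_3(U_i),
\]
where $U_K = \operatorname{Span}(B_1 \cup B_2 \cup B_3)$, with $B_i = \{x_{24(i-1)}, \dots, x_{24i-1}\}$, and $U_i = \operatorname{Span}(B \setminus B_i)$.

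\textbf{Proof proposal.}

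The plan is to argue at the level of monomial bases, exactly in the spirit of the proof of Lemma~\ref{lem_basis}. Recall that $E = \{x_a x_b x_c \mid 0 \le a \le b \le c \le n\}$ is a basis of $S_3(U)$, and that $S_3(U_i)$ (respectively $S_3(U_K)$) has as a basis the subset of $E$ consisting of those monomials all of whose indices avoid $B_i$ (respectively lie in $B_1 \cup B_2 \cup B_3$). Since the right-hand side is a sum of coordinate subspaces, each spanned by a subset of the fixed basis $E$, it equals the span of the union of these subsets. Thus it suffices to show that every monomial $x_a x_b x_c \in E$ lies in $S_3(U_K)$ or in $S_3(U_i)$ for some $i \in \{1,2,3\}$; that is, either all three indices $a,b,c$ belong to $B_1 \cup B_2 \cup B_3$, or there is some $i$ such that none of $a,b,c$ belongs to $B_i$.

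The key combinatorial observation is a pigeonhole argument. Suppose a monomial $x_a x_b x_c$ is \emph{not} in $S_3(U_K)$; then at least one of its indices, say it is ``exterior'', lies outside $B_1 \cup B_2 \cup B_3$. A monomial of degree $3$ has only three indices (with multiplicity), so the set of indices among $\{a,b,c\}$ that \emph{do} lie in $B_1 \cup B_2 \cup B_3$ has size at most $2$, and these occupy at most $2$ of the three blocks $B_1, B_2, B_3$. Hence there exists an index $i \in \{1,2,3\}$ such that \emph{no} index of the monomial lies in $B_i$, which means $x_a x_b x_c \in S_3(U_i)$. This covers every monomial of $E$, so $E$ is contained in $S_3(U_K) + \sum_{i=1}^3 S_3(U_i)$, giving the inclusion ``$\subseteq$''. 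The reverse inclusion is trivial since each summand on the right is a subspace of $S_3(U)$.

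There is essentially no hard part here; the only thing to be careful about is making the pigeonhole step precise. One clean way: write $J \subseteq \{a,b,c\}$ (as a multiset of indices, so $|J| \le 3$) for the collection of indices lying in $B_1 \cup B_2 \cup B_3$, and let $J^\complement$ be the remaining indices. If $J^\complement = \emptyset$ the monomial is in $S_3(U_K)$ and we are done; otherwise $|J| \le 2$, so the set of block-indices $\{ i \in \{1,2,3\} : J \cap B_i \neq \emptyset \}$ has at most two elements (as the $B_i$ are pairwise disjoint and each element of $J$ lands in exactly one block), hence there is an $i$ with $J \cap B_i = \emptyset$. Since also $J^\complement \cap B_i = \emptyset$ trivially (those indices avoid all blocks), we get $a,b,c \notin B_i$, i.e.\ $a,b,c \in B \setminus B_i$, so $x_a x_b x_c \in S_3(U_i)$. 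This finishes the argument; the statement holds for any $n$ (in particular it does not require $n \ge 72$, although that is the regime of interest).
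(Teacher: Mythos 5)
Your proof is correct and takes essentially the same approach as the paper: both reduce to the monomial basis $E$ of $S_3(U)$ and apply a pigeonhole argument showing that any monomial with at least one index outside $B_1\cup B_2\cup B_3$ has its remaining (at most two) interior indices occupying at most two of the three disjoint blocks, so some $B_l$ is avoided entirely. The paper phrases the pigeonhole step by singling out the largest index $k\ge 72$, while you phrase it more symmetrically, but it is the same idea; the only slight overstatement in your writeup is the claim that the lemma holds "for any $n$" — the definition of $B_3=\{x_{48},\dots,x_{71}\}\subseteq B$ implicitly requires $n\ge 71$.
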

\begin{proof}
Let $E = \{x_i x_j x_k \, | \, 0 \leq i \leq j \leq k \leq n\}$ be a basis of $S_3(U)$. Then, it suffices to show that $E$ is contained in $S_3(U_K) + \sum_{l=1}^3 S_3(U_l)$. 
Obviously, the monomial $x_i x_j x_k$ with $0 \leq i \leq j \leq k \leq n$ is an element of $S_3(U_K)$ if and only if $k \le 71$. Hence, we may assume that $k \ge 72$. In this case, there is at least one $l \in \{1,2,3\}$ such that $x_i, x_j, x_k \in B \setminus B_l$, where $B_l$ is defined as in Section \ref{sec_convenient_subspaces}. This means that $x_i x_j x_k \in S_3(U_l)$ for such an $l$, concluding the proof. 
\end{proof}

\begin{proof}[Proof of Proposition~\ref{prop:induction} (i)]
Here, we only prove that if $T(n,t(n);s_1(n-24),0,0)$ and $T(n-24,s_1(n-24);0,0,0)$ are true, then so is $T(n,s_1(n);0,0,0)$.
The proof of the superabundant case follows along the same path.

Let $p_1, \dots, p_{s_1(n)} \in \T_n$ and let $Z = \left\{p_1^{2,3}, \dots, p_{s_1(n)}^{2,3}\right\}$. Then, we obtain the following short exact sequence:
\begin{equation}
\label{eq:seq1}
0 \rightarrow \sI_{L_1 \cup Z}(3) \rightarrow \sI_Z(3) \rightarrow \sI_{L_1 \cap Z, L_1}(3) \rightarrow 0. 
\end{equation}
Suppose that $p_1, \dots, p_{s_1(n-24)}$ are generic points of $\T_n \cap \Ss_1$ and that the remaining $t(n)$ points are generic points of $\T_n \setminus \Ss_1$. Then, by assumption and Remark~\ref{rem:hilbert-function-2},  
\[
 \dim H^0 \left(\P(U), \sI_{L_1 \cup Z}(3)\right)  =  N(n)-N(n-24)-(2n+1)t(n)-48s_1(n-24) 
\]
and 
\[
\dim H^0 \left(L_1, \sI_{L_1 \cap Z, L_1}(3)\right) = N(n-24)-s_1(n-24) (2(n-24)+1). 
\]
From~(\ref{eq:seq1}), it follows that
\begin{align*}
\dim H^0\left(\P(U), \sI_Z(3)\right) &\leq \dim  H^0 \left(\P(U), \sI_{L_1 \cup Z}(3)\right) + \dim H^0 \left(L_1, \sI_{L_1 \cap Z, L_1}(3)\right) \\
&= N(n)-s_1(n)(2n+1). 
\end{align*}
On the other hand, since $h_{\P (U)}(Z,3) \leq s_1(n)(2n+1)$ by~(\ref{eq:hilbert-function}), we have 
\[
\dim  H^0\left(\P(U), \sI_Z(3)\right) = N(n)-h_{\P(U)}(Z,3) \geq N(n) -s_1(n)(2n+1), 
\]
which implies that $\dim  H^0\left(\P(U), \sI_Z(3)\right) = N(n)-s_1(n)(2n+1)$. Therefore, the statement~$T(n,s_1(n);0,0,0)$ is true. 
\end{proof}

\begin{proof}[Proof of Proposition~\ref{prop:induction} (ii)]  
We first show that if $T(n,96;t(n-24),t(n-24),0)$ and $T(n-24,t(n-24);s_1(n-48),0,0)$ are true, then so is $T(n,t(n);s_1(n-24),0,0)$. 

%

Let $p_1, \dots, p_{t(n)} \in \T_n \setminus \Ss_1$, let $q_1, \dots, q_{s_1(n-24)} \in \T_n \cap \Ss_1$, and let 
\[
Z = \left\{p_1^{2,3},\dots, p_{t(n)}^{2,3}, \, q_1^{2,3}, \dots, q_{s_1(n-24)}^{2,3}\right\}.
\]  
Assume that $p_1, \dots, p_{t(n-24)}$ are generic points of $\T_n \cap \Ss_2$ and that $q_1, \dots, q_{s_1(n-48)}$ are generic points of $\T_n \cap \Ss_1 \cap \Ss_2$. Then, we obtain the following short exact sequence:
\begin{equation*}
0 \rightarrow \sI_{L_2 \cup Z}(3) \rightarrow \sI_Z(3) \rightarrow \sI_{L_2 \cap Z, L_2}(3) \rightarrow 0.  
\end{equation*}
By the assumption that the superabundant statement $T(n,96;t(n-24),t(n-24),0)$ is true, i.e., $\dim H^0\left(\P (U), \sI_{L_2 \cup Z}(3)\right) = 0$ by Lemma \ref{lem:equiabundant}, and since $T(n-24,t(n-24);s_1(n-48),0,0)$ is true by assumption, we obtain
\begin{align*}
\dim H^0\left(\P(U), \sI_Z(3)\right) 
&\leq \dim H^0\left(\P (U), \sI_{L_2 \cup Z}(3)\right) + \dim H^0\left(L_2, \sI_{L_2\cap Z, L_2}(3)\right) \\
&= 0 + c(n-24) = c(n). 
\end{align*}
The last equality is due to Lemma~\ref{lem_codim}.
On the other hand, the actual codimension of $H^0\left(\P(U), \sI_Z(3)\right)$ is bigger than or equal to $c(n)$. Therefore, $\dim H^0\left(\P(U), \sI_Z(3)\right) = c(n)$, and hence $T(n,t(n);s_1(n-24),0,0)$ is true.  

Note that the statement $T(n,t(n);s_2(n-24),0,0)$ is superabundant. Consequently, the expected codimension of $W(n,t(n);s_2(n-24),0,0)$ is zero. We can therefore prove that the truths of $T(n,96;t(n-24),t(n-24),0)$ and $T(n-24,t(n-24);s_2(n-48),0,0)$ imply that~$T(n,t(n);s_2(n-24),0,0)$ is true in the same way as for $T(n,t(n);s_1(n-24),0,0)$. 
\end{proof}

\begin{proof}[Proof of Proposition \ref{prop:induction} (iii)]
For each $i \in \{1,2\}$, let $p_{i,1}, \dots, p_{i,t(n-24)} \in \T_n \cap \Ss_i$, let $q_1$, $\ldots$, $q_{96} \in \T_n \setminus (\Ss_1 \cup \Ss_2)$, and let 
\[
Z = \left\{p_{1,1}^{2,3}, \dots, p_{1,t(n-24)}^{2,3}, \, p_{2,1}^{2,3}, \dots, p_{2,t(n-24)}^{2,3}, \, q_1^{2,3}, \dots, q_{96}^{2,3}\right\}. 
\]
Assume that 
\begin{itemize}
\item[(1)] $p_{i,1}, \dots, p_{i,t(n-48)}$ are generic points of $(\T_n \cap \Ss_i) \cap \Ss_3$; 
\item[(2)] the $96$ points $p_{i,t(n-48)+1}, \ldots, p_{i,t(n-24)}$ are generic points of $(\T_n \cap \Ss_i) \setminus \Ss_3$; and 
\item[(3)] $q_1, \dots, q_{96}$ are generic points of $\T_n \cap \Ss_3$. 
\end{itemize}
Then, we have the short exact sequence 
\begin{equation}
\label{eq:seq3}
0 \rightarrow \sI_{L_3 \cup Z}(3) \rightarrow \sI_Z(3) \rightarrow \sI_{L_3 \cap Z, L_3}(3) \rightarrow 0.  
\end{equation}
The assumption that the statements $T(n,0;96,96,96)$ and $T(n-24,96;t(n-48),t(n-48),0)$ are true implies 
\[
\dim H^0\left(\P(U), \sI_{L_3 \cup Z}(3) \right) = \dim H^0\left(L_3, \sI_{L_3 \cap Z, L_3}(3)\right) = 0 
\]
by Lemma \ref{lem:equiabundant}.
Taking cohomology of~(\ref{eq:seq3}) gives rise to 
\[
\dim H^0 \left(\P(U), \sI_Z(3)\right) \leq \dim H^0\left(\P(U), \sI_{L_3 \cup Z}(3) \right) + \dim H^0\left(L_3, \sI_{L_3 \cap Z, L_3}(3)\right)  = 0,  
\]
from which it follows that $\dim H^0 \left(\P(U), \sI_Z(3)\right) =0$. Hence, $T(n,96;t(n-24),t(n-24),0)$ is true. 
\end{proof}

\begin{proof}[Proof of Proposition \ref{prop:induction} (iv)]
For each $i \in \{1,2,3\}$, let $p_{i,1}, \dots, p_{i,96}$ be generic points of $\T_n \cap \Ss_i$. Let 
\[
Z = \left\{\left. p_{i,1}^{2,3}, \dots, p_{i,96}^{2,3} \, \right| \, i \in \{1,2,3\}\right\}. 
\]
For simplicity, denote $\bigcup_{i=1}^3 L_i$ by $L$. Let $U_K$ be the subspace of $U$ spanned by $B_1 \cup B_2 \cup B_3 = \{x_0, \ldots, x_{71}\}$, and let $K = \P(U_K)$. 
Assume that $p_{i,1}, \dots, p_{i,96} \in \T_n \cap \P\left(S_3(U_K)\right)$ for each $i \in \{1,2,3\}$. Then, we have the following short exact sequence:
\begin{equation}
\label{eq:seq4}
 0 \rightarrow \sI_{L\cup Z \cup K}(3) \rightarrow \sI_{L \cup Z}(3) \rightarrow \sI_{(L\cup Z)\cap K, K}(3) \rightarrow 0. 
\end{equation}
Since $\dim H^0\left(\P(U), \sI_{L \cup K}(3) \right) = 0$ by Lemma \ref{lem:last-step} and 
\[
\dim H^0\left(\P(U), \sI_{L \cup K}(3) \right) \geq \dim H^0 \left(\P(U), \sI_{L \cup Z \cup K}(3)\right), 
\]
we get $ \dim H^0 \left(\P(U), \sI_{L \cup Z \cup K}(3)\right)=0$. Furthermore, the assumption that the statement $T(71,0;96,96,96)$ is true implies that $\dim H^0\left(K,\sI_{(L\cup Z)\cap K, K}(3)\right) =0$ by Lemma \ref{lem:equiabundant}. Taking cohomology of~(\ref{eq:seq4}) therefore yields $\dim H^0\left(\P (U), \sI_{L\cup Z}(3)\right) =0$. Thus, the statement $T(n,0;96,96,96)$ is also true, and hence we completed the proof. 
\end{proof}

\section{The base cases: Proof of Theorem \ref{thm_main}} \label{sec_base}
It remains to prove the correctness of the base cases of the inductive proof presented in the previous section, i.e., those cases appearing in Corollary \ref{cor_base_cases}. For proving the truths of the statements $T(n, s; a_1, a_2, a_3)$, we propose constructing a matrix $\mathbf{T}$ whose column span coincides with the subspace $W(n, s; a_1, a_2, a_3)$. The rank of $\mathbf{T}$ then coincides with the dimension of $W(n, s; a_1, a_2, a_3)$. Adopting such an approach allows us to leverage efficient algorithms from linear algebra, which were already employed with success in the context of identifiability of the tensor rank decomposition \cite{COV2014} and Waring's decomposition \cite{COV2015}.

Throughout this section, we deal with matrix representations of linear spaces, meaning that we will work in coordinates. We adopt the monomial basis $\{x_0, x_1, \ldots, x_n\}$ of $U \cong \C^{n+1}$. 
A natural basis of $S_3(U)$, when considered as the degree three piece of $\C[x_0, x_1, \ldots, x_n]$, is $E = \{ x_i x_j x_k \;|\; 0 \le i \le j \le k \le n \}$. By considering the lexicographic total monomial order $\le_\text{lex}$ on the elements of $E$, we can define an embedding 
\begin{align*}
 \nu_3:\qquad E &\to \C^{\binom{n+3}{3}} \\
x_i x_j x_k &\mapsto e_z,
\end{align*}
where $z+1$ is the position of $x_i x_j x_k$ in the lex-ordered sequence $x_0^3$, $x_0^2 x_1$, $x_0^2 x_2$, $\ldots$, $x_{n-1} x_n^2$, $x_n^3$, and $e_k$ is the standard basis vector that has a $1$ in position $k+1$ and zeros elsewhere. The domain of the map can be extended to the whole of $S_3(U)$ by linearity, namely define it as $\nu_3(w) = \sum_{0\le i\le j\le k \le n} c_{i,j,k} \nu_3(x_i x_j x_k)$ for $w = \sum_{0\le i\le j\le k \le n} c_{i,j,k} x_i x_j x_k$. With regard to these bases, the product of the linear forms
\begin{align*}
k_i &= k_{i,0} x_0 + k_{i,1} x_1 + \cdots + k_{i,n} x_n, \\
\ell_i &= \ell_{i,0} x_0 + \ell_{i,1} x_1 + \cdots + \ell_{i,n} x_n, \mbox{ and } \\
m_i &= m_{i,0} x_0 + m_{i,1} x_1 + \cdots + m_{i,n} x_n,
\end{align*}
i.e., $k_i \ell_i m_i \in S_3(U)$, is represented explicitly by the vector
\begin{align*}
\nu_3( k_i \ell_i m_i ) =
\begin{bmatrix}
k_{i,0} \ell_{i,0} m_{i,0} \\
k_{i,0}\ell_{i,0}m_{i,1} + k_{i,0}\ell_{i,1}m_{i,0} + k_{i,1}\ell_{i,0}m_{i,0} \\
\vdots \\
k_{i,n-1}\ell_{i,n-1}m_{i,n} + k_{i,n-1}\ell_{i,n}m_{i,n-1} + k_{i,n}\ell_{i,n-1}m_{i,n-1} \\
k_{i,n}\ell_{i,n}m_{i,n}
\end{bmatrix}
\end{align*}
with respect to the standard basis of $\C^{\binom{n+3}{3}}$.

\subsection{The algorithm}
Having stated accurately the isomorphism between $S_3(U)$ and $\C^{\binom{n+3}{3}}$, we can propose a basic approach for checking the truth of $T(n, s; a_1, a_2, a_3)$. It is presented as Algorithm~\ref{alg_basic_algorithm}.

\begin{algorithm}
\begin{enumerate}
\item
Select $s$ points $p_1, \dots, p_s$ of $\T_n$ by randomly choosing $\ell_i, m_i \in U$. 
As was shown in (\ref{eq:tangentSpace}), the affine cone over the tangent space to $\T_n$ at $p_i$ is given explicitly by
\(
 \ell_i^2 U+ \ell_i m_i U. 
\)
We can represent this space practically by the column span of the matrix
\[
 \mathbf{T}_{p_i} = \begin{bmatrix} \nu_3(\ell_i^2 x_0) & \cdots & \nu_3(\ell_i^2 x_n) & \nu_3(\ell_i m_i x_0) & \cdots & \nu_3(\ell_i m_i x_n) \end{bmatrix}.
\]
Let $\mathbf{R}_1$ be the horizontal concatenation of the matrices just defined.

\item For every $a_l \ne 0$, $l = 1,2,3$, we choose $a_l$ points $q_{l,1}, \dots, q_{l,a_l}$ of $\T_n \cap \Ss_l$ by randomly drawing two vectors $\ell_{l,j}, m_{l,j} \in U_l$. The span of the affine cone over the tangent space to $\T_n$ at the $q_{l,j}$'s modulo $S_3(U_l)$ is given by
\(
\ell_{l,j}^2 \overline{U}_l + \ell_{l,j} m_{l,j} \overline{U}_l. 
\) 
We can represent this space practically by the column span of the matrix
\begin{align*}
\mathbf{T}_{q_{l,j}} &= \begin{bmatrix} \mathbf{T}_{q_{l,j},1} & \mathbf{T}_{q_{l,j},2} \end{bmatrix}, \quad\text{where} \\
\mathbf{T}_{q_{l,j},1} &= \begin{bmatrix} \nu_3(\ell_{l,j}^2 x_{24(l-1)}) & \cdots & \nu_3(\ell_{l,j}^2 x_{24l-1}) \end{bmatrix} \text{ and} \\
\mathbf{T}_{q_{l,j},2} &= \begin{bmatrix} \nu_3(\ell_{l,j} m_{l,j} x_{24(l-1)}) & \cdots & \nu_3(\ell_{l,j} m_{l,j} x_{24l-1}) \end{bmatrix}.
\end{align*}
Let $\mathbf{R}_2$ be the horizontal concatenation of the matrices $\mathbf{T}_{q_{l,j}}$ just defined.

\item For every nonzero $a_l$, $l=1,2,3$, construct the matrix
\[
 \mathbf{F}_l = \begin{bmatrix} \nu_3(x_i x_j x_k) \;|\:\; 0 \le i \le j \le k \le n,\; i,j,k \not\in \{ 24(l-1), \ldots, 24l-1 \} \end{bmatrix},
\]
whose column span coincides with $S_3(U_l)$. Let $\mathbf{F}$ be the matrix obtained from concatenating all $\mathbf{F}_l$ horizontally. 

\item
Concatenate the matrices $\mathbf{F}$, $\mathbf{R}_{1}$, and $\mathbf{R}_{2}$ horizontally, and call the result $\mathbf{T}$. By construction, the rank of $\mathbf{T}$ coincides with the dimension of $W(n, s; a_1, a_2, a_3)$. If the rank is maximal, i.e., equal to the expected value (\ref{eq:dimension}), then $T(n,s;a_1,a_2,a_3)$ is true by semicontinuity. Otherwise, the algorithm declares that it does not know the answer; either the chosen points $p_i$ and $q_{i,j}$ were not sufficiently general, or $T(n, s; a_1, a_2, a_3)$ is false.
\end{enumerate}
\caption{A simple algorithm for confirming the truth of $T(n, s; a_1, a_2, a_3)$.}
\label{alg_basic_algorithm}
\end{algorithm}

While Algorithm~\ref{alg_basic_algorithm} is mathematically correct, implementing it as such may lead to a huge computational cost. As an example, consider the application of the algorithm to the statement $T(79, 96; 183, 183, 0)$, which corresponds to the most challenging case that we should prove in Corollary~\ref{cor_base_cases}. In the first step, $96$ matrices of size $88560 \times 160$ are constructed. The second step would construct two sets of $183$ matrices of size $88560 \times 48$. In the third step, the algorithm constructs $\mathbf{F}_1$, which is of size $N(79) \times N(55) = 88560 \times 30856$, and $\mathbf{F}_2$, which is of the same size. The concatenation of all these matrices is thus of size $88560 \times 94640$. Simply storing this matrix using standard $64$-bit integers would result in a memory consumption of about $62.4$GB. Based on our computational experiments, we can retroactively estimate that computing the rank of this matrix would take about two and a half days using one processing unit.

For overcoming the aforementioned double computational hurdle of memory consumption and long computation time, we propose a simple trick that will greatly improve the computational characteristics of the cases (ii) and (iii) of Corollary~\ref{cor_base_cases}, especially for the larger values of $n$. Naturally, one understands from Lemma~\ref{lem_basis} that the matrices $\mathbf{F}_i$ will contain some identical columns and that they may be removed from $\mathbf{T}$ without affecting the latter's rank. However, the gain from this will only be marginal; in the preceding example, one would still have to compute the rank of a $88560 \times 88656$ matrix. The more noteworthy improvement that we suggest is based on the following elementary property of orthogonal projectors; let $\mathbf{A} \in \C^{m \times n}$ and $\mathbf{B} \in \C^{m \times p}$, and then
\begin{align}\label{eqn_rank_help}
\rank{ \begin{bmatrix} \mathbf{A} & \mathbf{B} \end{bmatrix} } 
= \rank{\mathbf{A}} + \rank{ \mathbf{P}_{\mathbf{A}}^\perp \mathbf{B} } = n + \rank{ \mathbf{P}_{\mathbf{A}}^\perp \mathbf{B} },
\end{align}
provided that $\mathbf{A}$ is a matrix with $n \le m$ linearly independent columns, and where $\mathbf{P}_{\mathbf{A}}^\perp = \mathbf{I}_n - \mathbf{A}(\mathbf{A}^* \mathbf{A})^{-1}\mathbf{A}^*$ is the projection onto the orthogonal complement of the span of the columns of $\mathbf{A}$; herein, $\mathbf{A}^*$ is the conjugate transpose of $\mathbf{A}$, and $\mathbf{I}_n$ is the $n \times n$ identity matrix. 

Let us partition $\mathbf{T}$ as
\(
 \mathbf{T} = \left[\begin{smallmatrix} \mathbf{F} & \mathbf{R} \end{smallmatrix}\right],
\)
and let $\mathbf{F}'$ be the matrix obtained from $\mathbf{F}$ by removing identical columns, i.e., the columns of $\mathbf{F}'$ constitute a basis of the column space of $\mathbf{F}$. Then, the column span of $\mathbf{T}$ equals the column span of $\mathbf{T}' = \left[\begin{smallmatrix} \mathbf{F}' & \mathbf{R} \end{smallmatrix}\right]$. We will now apply (\ref{eqn_rank_help}) to $\mathbf{T}'$. 
With the proposed isomorphism $\nu_3$ it can be verified that the $t$ columns of $\mathbf{F}'$ form a subset of the standard basis vectors of $\C^{\binom{n+3}{3}}$. Let us write $\mathbf{F}'$ explicitly as $\mathbf{F}' = \left[\begin{smallmatrix} e_{z_1} & e_{z_2} & \cdots & e_{z_t} \end{smallmatrix}\right]$, where $0 \le z_1 < z_2 < \cdots < z_t \le \binom{n+3}{3}$ and where the particular values of $z_i$ are those implicitly given in the proof of Lemma~\ref{lem_basis}. In this case, the projection onto the complement of the column span of $\mathbf{F}'$ takes a particularly pleasing form: 
\[
\mathbf{P}_{\mathbf{F}'}^\perp = \mathbf{I}_n - \mathbf{F}' (\mathbf{I}_t)^{-1} (\mathbf{F}')^* = \mathbf{I}_n - \sum_{i=1}^t e_{z_i} e_{z_i}^*.
\]
Thus, $\mathbf{P}_{\mathbf{F}'}^\perp \mathbf{R}$ simply sets the $z_1$th, $z_2$th, $\ldots$, $z_t$th row of $\mathbf{R}$ to zero. Since the rows consisting only of zeros do not influence the rank of a matrix, one may even remove them. Let $Y = \{1,2,\ldots,N(n)\} \setminus \{z_1,z_2,\ldots,z_t\}$ consist of the indices of the rows that are not zero. Then, we denote by $\mathbf{R}(Y)$ the matrix consisting of the rows of $\mathbf{R}$ with row indices appearing in $Y$.
As a result, we have
\[
\rank \mathbf{T} = \rank \mathbf{T}' = \dim \sum_{j=1}^3 \chi(a_j) S_3(U_j) + \rank \mathbf{R}(Y),
\]
where $\chi(a_j) S_3(U_j) = \emptyset$ if $a_j=0$, and $S_3(U_j)$ otherwise. In the foregoing, the value of the first term is given explicitly by Lemma \ref{lem_basis}. Consequently, it suffices to compute the rank of the subset of the rows of $\mathbf{R}$ corresponding to the index set $Y$. It should be remarked that the elements of $Y$ can be computed as the complement of the set of $z_i$'s whose values can be computed directly from the definition of the map $\nu_3$; one simply computes the position of a monomial in some lex-ordered sequence of monomials. In particular, this calculation can be performed without auxiliary memory requirements. That is, we only need to store $|Y|$ integer values, which contrasts markedly with the straightforward approach in which the entire matrix $\mathbf{F}$ would be constructed explicitly.

In conclusion, we propose optimizing the computational properties of the basic algorithm by replacing steps (3) and (4) with the following alternative:
\begin{itemize}
 \item[(3)] For every nonzero $a_l$, compute the set of integers
\[
 Z_l = \bigl\{ z \;|\; e_z = v_3(x_i x_j x_k),\; i,j,k \in \{0,1,\ldots,n\}\setminus\{24(l-1),\ldots,24l-1\} \bigr\},
\]
while $Z_l = \emptyset$ for every $a_l$ that is zero. Compute 
\[
Y = \bigl\{ 1,2,\ldots,N(n) \bigr\} \setminus (Z_1 \cup Z_2 \cup Z_3).
\]
 
 \item[(4)] Concatenate the matrices $\mathbf{R}_1$ and $\mathbf{R}_2$ horizontally, and call the resulting matrix $\mathbf{R}$. By construction, the rank of $\mathbf{R}(Y)$ coincides with 
\[
 \dim W(n, s; a_1, a_2, a_3) - \dim \sum_{i=1}^3 \chi(a_j) S_3(U_j).
\]
If the rank of $\mathbf{R}(Y)$ plus $\dim \sum_{i=1}^3 \chi(a_j) S_3(U_j)$ equals the maximal value, i.e., the expected value (\ref{eq:dimension}), then $T(n, s; a_1, a_2, a_3)$ is true by semicontinuity. Otherwise, the algorithm declares that it does not know the answer; either the chosen points $p_i$ and $q_{i,j}$ were not sufficiently general, or $T(n, s; a_1, a_2, a_3)$ is false.
\end{itemize}
We will refer to this version of Algorithm~\ref{alg_basic_algorithm} as the optimized version, while the original statement of Algorithm~\ref{alg_basic_algorithm} will be called the basic version.

\subsection{Implementation aspects}
For efficiently and reliably computing the rank of $\mathbf{T}$, we propose to adopt the same strategy as \cite{COV2014}. The idea consists of choosing the points $p_i$ and $q_{i,j}$ in a convenient manner. Let $a$ be a sufficiently large prime number and consider the definitions of $U$ and $U_l$ over the prime field $\Z_a$. We suggest picking $p_i = \ell_i^{d-1} m_i$ by sampling random vectors $\ell, m \in U$ whose entries are uniformly drawn from $[0, a-1]$. Similarly, we choose $q_{i,j}$ by randomly sampling two vectors $\ell, m \in U_i$. We may then obtain a lower bound on the rank of $\mathbf{T}$ by computing the rank of $\mathbf{T}$ over $\Z_a$, rather than over $\C$. The rank of $\mathbf{T}$ over the finite field $\Z_a$ may be computed by reducing $\mathbf{T}$ to row echelon form by applying Gaussian elimination over $\Z_a$.
The proposal to bound the rank from below by choosing special points and performing a rank computation over a finite field does not fundamentally alter the output of the optimized version of Algorithm~\ref{alg_basic_algorithm}. When the rank of $\mathbf{T}$ is maximal over $\Z_a$, i.e., equal to the expected value $w(n, s; a_1, a_2, a_3)$, then the rank over $\C$ will also be maximal. If the rank of $\mathbf{T}$ is less than expected, then the algorithm will claim that it does not know whether $T(n, s; a_1, a_2, a_3)$ is true; our modification has only introduced an additional source of uncertainty, namely it may be the case that the rank over $\Z_a$ is strictly strictly less than over $\C$.

The optimized algorithm was implemented in C++, and it is included in the ancillary files. We used basic data structures from the Eigen v3 \cite{Eigen} library. The construction of $\mathbf{R}$ was partially parallelized with OpenMP v3.1. The rank $\mathbf{R}(Y)$ was computed over a finite field with characteristic $8191$ using the $\texttt{Rank}$ function provided by FFLAS--FFPACK \cite{FFLAS}, which essentially computes an $LU$-factorization with row and column pivoting. The underlying BLAS implementation that FFLAS--FFPACK requires was the optimized OpenBLAS \cite{OpenBLAS} library. We ran the program on a computer system containing $128$GB of main memory and two central processing units (twice an Intel Xeon E5-2697 v3), each with $14$ processing cores clocked at $2.60$GHz. Using \texttt{numactl} we instructed the software to use all $14$ cores of one central processing unit; in particular, the calculation of the rank over $\Z_{8191}$ proceeded in parallel.

\subsection{Computational complexity} 
As we claimed before, pursuing the optimization that was proposed for the basic version of Algorithm~\ref{alg_basic_algorithm} is worthwhile for proving statements of type (ii) and (iii) in Corollary \ref{cor_base_cases}. We can now justify this claim.

\begin{proposition}\label{prop_statement_ii}
The time complexity of the optimized version of Algorithm~\ref{alg_basic_algorithm} for verifying the truth of the statement $T(n, 96; t(n-24), t(n-24), 0)$ is \(\ko(n^4)\). Its space complexity is $\ko(n^4)$.
\end{proposition}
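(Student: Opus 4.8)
The plan is to walk through the optimized version of Algorithm~\ref{alg_basic_algorithm} and bound the size of every object it creates, together with the cost of the single genuine computation it performs---the rank of $\mathbf{R}(Y)$. Throughout, recall that $N(n) = \binom{n+3}{3}$ is a cubic polynomial in $n$, and that for the statement in question $s = 96$, $a_1 = a_2 = t(n-24) = 4(n-24)-37$, and $a_3 = 0$.

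First I would count columns. The block $\mathbf{R}_1$ concatenates the $96$ matrices $\mathbf{T}_{p_i}$, each with $2(n+1)$ columns, so it has $192(n+1)$ columns; the block $\mathbf{R}_2$ concatenates the $2\,t(n-24)$ matrices $\mathbf{T}_{q_{l,j}}$, each with $48$ columns, so it has $96\,t(n-24)$ columns. Hence $\mathbf{R} = [\,\mathbf{R}_1 \mid \mathbf{R}_2\,]$ has $N(n) = \ko(n^3)$ rows and $\ko(n)$ columns. Each column is a vector $\nu_3(\,\cdot\,)$ whose $N(n)$ entries are obtained, after an $\ko(n^2)$-time precomputation of the relevant degree-$2$ form (such as $\ell_i^2$ or $\ell_i m_i$), by $\ko(1)$ work apiece; thus a column is filled in $\ko(n^3)$ operations and $\mathbf{R}$ is assembled in $\ko(n^3)\cdot\ko(n) = \ko(n^4)$ time and stored in $\ko(n^4)$ space. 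Forming the sets $Z_1,Z_2$ and the index set $Y = \{1,\dots,N(n)\}\setminus(Z_1\cup Z_2)$ costs $\ko(n^3)$ time and space, which stays within this budget.

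Next I would argue that $Y$ is small. Since $Z_l$ is in bijection with the monomial basis of $S_3(U_l)$, and $Z_1\cap Z_2$ with that of $S_3(U_1\cap U_2)$, the Inclusion--Exclusion Principle---that is, Lemma~\ref{lem_basis} with $k=2$---gives $|Z_1\cup Z_2| = 2N(n-24)-N(n-48)$, whence
\[
|Y| = N(n)-2N(n-24)+N(n-48) = 576n-12672,
\]
the last equality being precisely the identity established in the proof of Lemma~\ref{lem:equiabundant}; since $N$ is cubic, this second difference with step $24$ is linear in $n$. Consequently $\mathbf{R}(Y)$ has $\ko(n)$ rows and $\ko(n)$ columns, so its rank can be computed by Gaussian elimination---the $LU$ factorization with row and column pivoting used in the implementation---in $\ko(n^3)$ arithmetic operations and $\ko(n^2)$ extra space. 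The residual bookkeeping, namely adding the closed-form number $\dim\bigl(S_3(U_1)+S_3(U_2)\bigr) = 2N(n-24)-N(n-48)$ supplied by Lemma~\ref{lem_basis} and comparing with the expected value~(\ref{eq:dimension}), is $\ko(1)$.

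Summing the estimates gives a running time of $\ko(n^4)$ (building $\mathbf{R}$) plus $\ko(n^3)$ (computing $Y$) plus $\ko(n^3)$ (the rank), i.e.\ $\ko(n^4)$, with a memory footprint dominated by $\mathbf{R}$, i.e.\ $\ko(n^4)$. The only step calling for a genuine argument---and the reason the optimized version beats the basic one in this regime---is the bound $|Y| = \ko(n)$: it collapses the rank computation from the $\ko(n^5)$ cost of eliminating a full $\ko(n^3)\times\ko(n)$ matrix down to $\ko(n^3)$, and it is nothing more than the equiabundance of $T(n,96;t(n-24),t(n-24),0)$ furnished by Lemma~\ref{lem:equiabundant}. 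Everything else is a routine parameter count.
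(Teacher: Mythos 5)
Your proof is correct and follows essentially the same argument as the paper: bound the construction cost of $\mathbf{R}$ by $\ko(n^4)$ from the $\ko(n^3)$ rows and $\ko(n)$ columns, observe (via the second difference $N(n)-2N(n-24)+N(n-48) = 576n - 12672$, as in Lemma~\ref{lem:equiabundant}) that $|Y| = \ko(n)$, and conclude that the rank of the $\ko(n)\times\ko(n)$ matrix $\mathbf{R}(Y)$ costs only $\ko(n^3)$, so both the time and space budgets are dominated by assembling $\mathbf{R}$. The extra commentary on filling each column via a degree-$2$ precomputation is a finer-grained accounting than the paper gives, but it reaches the same bounds.
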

\begin{proof}
The time complexity of the first two steps of Algorithm~\ref{alg_basic_algorithm} amounts to 
\[
 \ko\bigl( 96(2n+2) N(n) + 2\cdot48\cdot t(n-24) N(n) \bigr) = \ko( n^4 )
\]
operations for constructing the matrix $\mathbf{R}$; recall that $t(n) = 4n - 37$ scales linearly in $n$. 
The optimized algorithm retains only
\[
 |Y| = N(n) - 2 N(n-24) + N(n-48) = 576n - 12672
\]
rows of $\mathbf{R}$ in the last step, while the number of columns of $\mathbf{R}$ is 
\[
2 \cdot t(n-24) \cdot 48 + 96 (2n+2) = 576n - 12576.
\] 
Hence, the Gaussian elimination step for computing the rank of $\mathbf{R}(Y)$ requires only $\ko(n^3)$ operations. The time complexity is thus dominated by the cost of constructing $\mathbf{R}$. 
From the size of the involved matrices, it follows that $\mathbf{R}$ is the largest matrix that should be (temporarily) stored, for a total space complexity of $\ko(n^4)$ values in $\Z_a$.
\end{proof}

\begin{proposition}\label{prop_statement_iii}
 The time complexity of the optimized version of Algorithm~\ref{alg_basic_algorithm} for verifying the truth of the statement $T(n, t(n); s_i(n-24), 0, 0)$, $i\in\{1,2\}$, is \(\ko(n^6)\). Its space complexity is $\ko(n^5)$.
\end{proposition}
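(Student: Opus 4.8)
The plan is to mirror the proof of Proposition~\ref{prop_statement_ii}, carefully tracking how the three parameters $s = t(n)$, $a_1 = s_i(n-24)$, and $a_2 = a_3 = 0$ enter the size estimates for the matrices produced by the optimized version of Algorithm~\ref{alg_basic_algorithm}. First I would record the relevant asymptotics: $t(n) = 4n - 37 = \ko(n)$ by definition, whereas $s_i(n-24) = \Theta(n^2)$ by Proposition~\ref{eq:s} together with the explicit formula for $s_1$ (whose leading term is $48(\lfloor n/24\rfloor)^2 \sim n^2/12$). From this it follows that in step (1) the matrix $\mathbf{R}_1$ consists of $t(n)(2n+2) = \ko(n^2)$ columns, and in step (2) the matrix $\mathbf{R}_2$ consists of $48\, s_i(n-24) = \ko(n^2)$ columns, each column of height $N(n) = \ko(n^3)$. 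Hence $\mathbf{R} = [\,\mathbf{R}_1 \ \mathbf{R}_2\,]$ has size $N(n) \times \ko(n^2)$, and computing all its entries through $\nu_3$ (each entry being a bounded number of field operations) costs $\ko(n^5)$.

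Next I would analyze the optimized steps (3)--(4). Since only $a_1 \neq 0$ here, the set of removed rows is just $Z_1$, so $|Y| = N(n) - N(n-24)$. This is a \emph{first} difference of the cubic $N(\,\cdot\,)$, hence a quadratic polynomial, $|Y| = 12n^2 + \ko(n) = \Theta(n^2)$; this is the one structural difference with case (ii), where a \emph{second} difference kept $|Y|$ merely linear. Forming $Z_1$ and $Y$ costs $\ko(N(n)) = \ko(n^3)$, which is negligible, and adding $\dim\sum_j \chi(a_j) S_3(U_j) = N(n-24)$, available in closed form from Lemma~\ref{lem_basis}, is free. The matrix $\mathbf{R}(Y)$ therefore has $\Theta(n^2)$ rows and $\ko(n^2)$ columns, so the Gaussian elimination performed by the rank routine costs $\ko\bigl(|Y| \cdot c \cdot \min\{|Y|,c\}\bigr) = \ko(n^2 \cdot n^2 \cdot n^2) = \ko(n^6)$, where $c$ is the number of columns of $\mathbf{R}$. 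Combining the costs, the $\ko(n^6)$ rank computation dominates the $\ko(n^5)$ construction of $\mathbf{R}$, so the total time complexity is $\ko(n^6)$.

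For the space bound I would observe that the only stored object whose size exceeds $\ko(n^4)$ is $\mathbf{R}$ itself (equivalently $\mathbf{R}_1$ and $\mathbf{R}_2$), of $N(n) \times \ko(n^2) = \ko(n^5)$ entries in $\Z_a$; the reduced matrix $\mathbf{R}(Y)$ occupies only $\Theta(n^2)\times\ko(n^2) = \ko(n^4)$, and the index sets $Y$, $Z_1$ require $\ko(n^3)$ integers. Hence the space complexity is $\ko(n^5)$. The only delicate point, and the one I would double-check, is precisely the orders of $s_i(n-24)$ and of $|Y|$: everything hinges on $s_i(n-24)$ being quadratic and on $|Y|$ being a first (not second) difference of $N(\,\cdot\,)$, since it is exactly the resulting $\Theta(n^2)\times\ko(n^2)$ shape of $\mathbf{R}(Y)$ that pushes the cost up to $\ko(n^6)$, one order worse than in Proposition~\ref{prop_statement_ii}.
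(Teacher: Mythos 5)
Your argument is correct and follows essentially the same route as the paper: bound the construction cost of $\mathbf{R}$ by $\ko\bigl(t(n)(2n+2)N(n) + 48\,s_i(n-24)N(n)\bigr) = \ko(n^5)$, observe that $\mathbf{R}(Y)$ is $\Theta(n^2)\times\ko(n^2)$ because $|Y| = N(n)-N(n-24)$ is a first difference of a cubic and $s_i$ is quadratic, and conclude that the rank computation at $\ko(n^6)$ dominates while storage of $\mathbf{R}$ gives the $\ko(n^5)$ space bound. Your added remark contrasting the first-difference nature of $|Y|$ here with the second-difference behavior in the preceding proposition is a nice clarification but not a different method.
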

\begin{proof}
The time complexity of the first two steps is
\[
 \ko\bigl( t(n) (2n+2) N(n) + 48 s_i(n-24) N(n)  \bigr) = \ko(n^5);
\]
recall from the explicit expression in Section \ref{sec_sub_sup} that $s_i(n)$ scales quadratically in $n$. The number of rows of $\mathbf{R}(Y)$ equals $N(n) - N(n-24) = \ko(n^2)$, while the number of columns equals $t(n)(2n+2) + 48s_i(n-24) = \ko(n^2)$. Therefore, the rank computation requires $\ko(n^6)$ operations, yielding the time complexity. From the size of the involved matrices, it follows that storing $\mathbf{R}$ requires $\ko(n^5)$ values of $\Z_a$.
\end{proof}

Statements of type (ii) and (iii) in Corollary \ref{cor_base_cases} can thus be verified in a more efficient manner than attempting to  prove the statement $T(n, s_i(n); 0,0,0)$, $i\in\{1,2\}$, directly. It is namely straightforward to verify the following result.
\begin{proposition}\label{prop_statement_iv}
 The time complexity of both versions of Algorithm~\ref{alg_basic_algorithm} for verifying the truth of the statement $T(n, s_i(n); 0, 0, 0)$, $i\in\{1,2\}$, is \(\ko(n^9)\). Its space complexity is $\ko(n^6)$.
\end{proposition}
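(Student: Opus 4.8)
The plan is to repeat the bookkeeping that underlies Propositions~\ref{prop_statement_ii} and~\ref{prop_statement_iii}, the one genuinely new observation being that the optimization described after Algorithm~\ref{alg_basic_algorithm} buys nothing asymptotically when $a_1=a_2=a_3=0$, so the basic and optimized versions do, up to constants, exactly the same work.

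First I would record that for the statement $T(n,s_i(n);0,0,0)$ steps~(2) and~(3) of Algorithm~\ref{alg_basic_algorithm} are vacuous: there are no auxiliary points $q_{l,j}$ and no subspaces $S_3(U_l)$, so $\mathbf{R}_2$ and $\mathbf{F}$ are empty and $\mathbf{T}=\mathbf{R}=\mathbf{R}_1$. In the optimized version one has $Z_1=Z_2=Z_3=\emptyset$, whence $Y=\{1,\dots,N(n)\}$ and $\mathbf{R}(Y)=\mathbf{R}$, so no rows are discarded. Hence it suffices to bound the cost of Step~(1) together with the cost of the final rank computation, and the resulting estimate applies verbatim to both versions.

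Next I would count sizes. Recall from Section~\ref{sec_sub_sup} that $s_i(n)$ is given by an explicit quadratic polynomial in $n$, so $s_i(n)=\ko(n^2)$, while $N(n)=\binom{n+3}{3}=\ko(n^3)$. The matrix $\mathbf{R}$ is the horizontal concatenation of the $s_i(n)$ blocks $\mathbf{T}_{p_i}$, each of size $N(n)\times(2n+2)$; thus $\mathbf{R}$ has $N(n)=\ko(n^3)$ rows and $(2n+2)\,s_i(n)=\ko(n^3)$ columns. Every entry of a block $\mathbf{T}_{p_i}$ is a fixed polynomial expression in the coordinates of $\ell_i,m_i$, evaluable in $\ko(1)$ operations over $\Z_a$, so constructing $\mathbf{R}$ costs $\ko(n^6)$ operations and $\ko(n^6)$ storage. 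Computing the rank of an $m\times p$ matrix over $\Z_a$ by an $LU$-factorization with pivoting costs $\ko\bigl(mp\min\{m,p\}\bigr)$ operations; with $m=p=\ko(n^3)$ this is $\ko(n^9)$. Adding the two contributions gives time $\ko(n^6)+\ko(n^9)=\ko(n^9)$, while the space is dominated by storing $\mathbf{R}$, hence $\ko(n^6)$.

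There is no real obstacle here; the only points needing a moment's care are the two just flagged---that $Y$ is the full index set when $a_1=a_2=a_3=0$ (which is why the statement speaks of ``both versions''), and that $s_i(n)$ is genuinely of order $n^2$, which is immediate from the formula in Section~\ref{sec_sub_sup}. Comparing with Propositions~\ref{prop_statement_ii} and~\ref{prop_statement_iii} then makes the point of the inductive reduction transparent: the base cases of types~(ii) and~(iii) are cheaper precisely because there $Y$ is a strictly smaller subset of $\{1,\dots,N(n)\}$, shrinking the matrix whose rank must be computed.
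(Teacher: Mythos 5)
Your argument is correct and follows exactly the accounting style the paper uses in Propositions~\ref{prop_statement_ii} and~\ref{prop_statement_iii}; in fact the paper omits a proof of Proposition~\ref{prop_statement_iv} altogether (calling it ``straightforward to verify''), so your write-up simply supplies the intended bookkeeping. In particular you correctly identify the two load-bearing observations --- that with $a_1=a_2=a_3=0$ one has $\mathbf{F}$ empty and $Y=\{1,\dots,N(n)\}$, so both versions coincide and $\mathbf{T}=\mathbf{R}$ is an $\ko(n^3)\times\ko(n^3)$ matrix, and that $s_i(n)=\ko(n^2)$ --- from which the $\ko(n^9)$ elimination cost and $\ko(n^6)$ storage follow.
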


\begin{figure}
\input{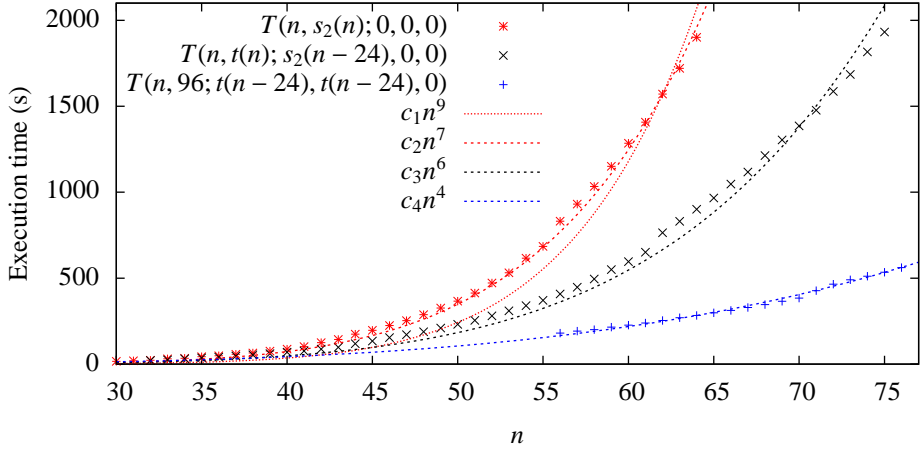}
\caption{A plot of the execution time for proving the different types of statements in function of the dimension $n$ of $U$. The dotted lines indicate the asymptotic time complexity, as determined in Propositions \ref{prop_statement_ii}, \ref{prop_statement_iii} and \ref{prop_statement_iv}.}
\label{fig_computation}
\end{figure}

As a confirmation of foregoing time complexity analyses, we will compare them with experimentally obtained execution times of the optimized implementation of Algorithm~\ref{alg_basic_algorithm}. 
In Figure \ref{fig_computation}, we plot the total execution time of the program for verifying the truth of the three types of statements featured in the foregoing propositions for increasing values of the dimension $n$ of the projective space $\P(U)$. In the figure, we also display four monomials that were fitted to the data: the monomials $c_1 n^9$ and $c_2 n^7$ were fitted to the execution times for verifying statements of type $T(n, s_2(n); 0, 0, 0)$, the monomial $c_3 n^6$ was fitted to the execution times for verifying statements of type $T(n, t(n); s_2(n-24), 0, 0)$, and the monomial $c_4 n^4$ was fitted to the execution times for verifying the statements $T(n, 96; t(n-24), t(n-24), 0)$. The constants $c_i$ of the monomials were determined using the \texttt{fit} command of gnuplot 4.6. As can be seen, the experimental results for the statements appearing in Propositions \ref{prop_statement_ii} and \ref{prop_statement_iii} line up well with the theoretically determined asymptotic time complexities. However, the execution times for proving statements $T(n, s_2(n); 0, 0, 0)$ do not seem to obey an asymptotic growth of $\ko(n^9)$; rather, it seems to grow only like $\ko(n^7)$. One likely explanation for this phenomenon is that the pivoted $LU$-factorization method in FFLAS--FFPACK is exploiting the additional structure that is present in $\mathbf{R}$. The matrix $\mathbf{R}(Y) = \mathbf{R}$ corresponding to statements $T(n, s_i(n); 0, 0, 0)$ is namely very \emph{sparse}, i.e., it contains many entries equal to zero. Consider the first step of Algorithm~\ref{alg_basic_algorithm}, and then it is clear that both $\nu_3(\ell_i^2 x_j)$ and $\nu_3(\ell_i m_i x_j)$ each contain precisely $\dim S_2(U) = \binom{n+2}{2}$ nonzero values, for a generic choice of $\ell_i$ and $m_i$. Hence, out of the $2(n+1)\binom{n+3}{3}$ elements of $\mathbf{T}_{p_i}$ only $2(n+1)\binom{n+2}{2}$ of them are not equal to zero. That is, the fraction of zeros of $\mathbf{T}_{p_i}$, and, hence, of $\mathbf{T}$, equals $1 - 3(n+3)^{-1}$, which tends to $1$ as $n \to \infty$. It may thus be appropriate to treat $\mathbf{R}$ as a sparse matrix when computing its rank. The design and implementation of efficient methods for sparse $LU$-factorization is a field with a very rich literature (see, e.g., \cite{Davis,Duff}), but it lies well beyond the scope of the present paper. 

Figure \ref{fig_computation} facilitates appreciation of the tremendous difference in execution times for projective spaces of equal dimension when proving one of the three types of statements. 
That is, the optimizations that we proposed reduce the theoretical time complexity, but also resulted in practical and significant time savings. Without these optimizations, the time required to prove the type of statements appearing in Corollary \ref{cor_base_cases} (ii) and (iii) would likely increase asymptotically as $c_2 n^7$, as is suggested by Figure \ref{fig_computation}. 

\subsection{The proof}
Of foremost interest is whether we can prove Corollary \ref{cor_base_cases} with the algorithm. This turns out to be the case.

\begin{proof}[Proof of Corollary \ref{cor_base_cases}]
We computed the rank of $\mathbf{R}(Y)$ over the finite field $\Z_{8191}$ with prime characteristic $8191$. The optimized algorithm was able to compute the ranks of all of the matrices involved in Corollary \ref{cor_base_cases}, confirming in every instance that their rank coincided with the expected dimension of the subspace $W(n, s; a_1, a_2, a_3)$. We were particularly fortunate that in every case the maximal rank was found at the first random example that we sampled. The total execution time for proving Corollary \ref{cor_base_cases} with our experimental setup was $85.5$s for case (i), $8871.2$s for case (ii), $6917.7$s for case (iii), and $221.9$s for case (iv). The base cases could thus be proved in just under four and a half hours.

The output of the algorithm includes a certificate consisting of the explicit expressions of the linear forms $\ell_i, m_i \in \Z_{8191}^{n+1}$ and $\ell_{l,j}, m_{l,j} \in U_l$. At the corresponding points $p_i = [\ell_i^{d-1} m_i] \in \T_n$ and $q_{l,j} = [\ell_{l,j}^{d-1} m_{l,j}] \in \T_n \cap \Ss_l$, the linear span of the tangent spaces to the tangential variety $\T_n$ spans a subspace $W(n, s; a_1, a_2, a_3)$ of dimension exactly equal to the expected, i.e., maximal, value $w(n, s; a_1, a_2, a_3)$.
All the certificates produced by our algorithm may be found at the following web page: \url{https://doi.org/10.13140/RG.2.1.2843.3368}. They constitute the proof of Corollary \ref{cor_base_cases}.

To give an impression of the output of the program, an example is included below. The output of the algorithm for proving the truth of the only true equiabundant statement $T(7, 8; 0, 0, 0)$ is as follows.
\begin{verbatim}
Using random seed: 1440664437
l_0 = [6240 5559 4744 2128 3525 2499 7333 2585]
m_0 = [5179 5860 2731 4978 4356 4995  358 2752]
l_1 = [6524 4761 3599 7815 1716 2187 4195 7889]
m_1 = [1512 3708 6893 7109 5519 5965 5496 2212]
l_2 = [2484 8072 7956 3951 6365   63 6777   37]
m_2 = [5225 7196 2009 3291 6451 5475 2616 5079]
l_3 = [4096  596 3500 6582 5675 2959 6074 3891]
m_3 = [4798 7696  188 5184  578 1679 2657  335]
l_4 = [7882 7500 5717 2715 1488 1144 5362 5122]
m_4 = [3740 7615 3260 3859 2746   75 1181 1268]
l_5 = [5979  741 5874 6408 7902 5006 3801 6057]
m_5 = [5718 1256 7323 3359 1176 5753  675 3460]
l_6 = [4415 2885  403 5801  124 1935 8094 6722]
m_6 = [5366 1942 5568 1892 6945 5454 7057 5850]
l_7 = [4552 7106 6564 5562 6468 3805 3021 5507]
m_7 = [7463 2235 5324 6275 2378 2047 1639 7436]
Constructed the 120 x 128 matrix R(Y) in 0.081s.
Computed the rank of R(Y) over F_8191 in 0.007s.
Found 0 + 120 = 120 vs. 120 expected.
T(7, 8; 0, 0, 0) is TRUE (SUBABUNDANT)
Total computation took 0.088s.
\end{verbatim}
\end{proof}



\end{document}